\documentclass[11pt,reqno]{amsart}
\usepackage{amsmath,amssymb,mathrsfs,extarrows,mathtools,amsthm}
\allowdisplaybreaks[3]
\usepackage{graphicx,cite}
\usepackage{subfigure}
\usepackage{xcolor}
\usepackage{epstopdf}
\usepackage{graphics} %% add this and next lines if pictures should be in esp format
\usepackage{epsfig} %For pictures: screened artwork should be set up with an 85 or 100 line screen
\usepackage{tikz}
\usepackage{paralist}
\usepackage{booktabs}
\usepackage{enumerate}
\usepackage{enumitem}
\usepackage{bbm}
\usepackage{mdwlist}
\usepackage{url}
\usepackage{algorithm,algorithmic}
\makeatletter

\newcommand{\Rmnum}[1]{\expandafter\@slowromancap\romannumeral #1@}
\makeatother
%上面四行是可以添加罗马数字的
%\numberwithin{equation}{section} %公式按照章编号
%\numberwithin{table}{section}  %表格按照章编号
%\newtheorem{assumption}[Theorem]{Assumption}
\setlength{\topmargin}{-1.5cm}
\setlength{\oddsidemargin}{0.0cm}
\setlength{\evensidemargin}{0.0cm}
\setlength{\textwidth}{16.7cm}
\setlength{\textheight}{23cm}
\headheight 20pt
\headsep    26pt
\footskip 0.4in

\setlength{\itemsep}{0pt}
\setlength{\parsep}{0pt}
\setlength{\parskip}{2pt}
\newtheorem{theorem}{Theorem}[section]

\newtheorem{lemma}[theorem]{Lemma}
\newtheorem{proposition}[theorem]{Proposition}
\newtheorem{remark}[theorem]{Remark}

\newcommand{\face}{\Gamma_0}
\newcommand{\lowsp}{\mathbb{R}^2_-}
\newcommand{\upsp}{\mathbb{R}^2_+}

\newcommand{\my}{\mathcal{Y}}
\newcommand{\mg}{\mathcal{G}}

\newcommand{\me}{\mathrm{e}}
\newcommand{\mi}{\mathrm{i}}

\newcommand{\mv}{\mathrm{v}}
\newcommand{\ms}{\mathbb{S}}
\newcommand{\ps}{\widehat{\mathbb{S}}}
\newcommand{\mm}{\mathbb{M}}
\newcommand{\mr}{\mathbb{R}^2}

\newcommand{\oo}{\mathcal{O}}

\newcommand{\uinc}{u^i}
\newcommand{\uincf}{\uinc_f}
\newcommand{\usc}{u^{\rm sc}}
\begin{document}
%	\maketitle	
	\title[Selective focusing in a layered medium]{Selective focusing of multiple particles in a layered medium}

	\author{Jun Lai}
	\address{School of Mathematical Sciences, Zhejiang University,
		Hangzhou, Zhejiang 310027, China}
	\email{laijun6@zju.edu.cn}
	\author{Jinrui Zhang}
	\address{School of Mathematical Sciences, Zhejiang University,
		Hangzhou, Zhejiang 310027, China}
	\email{zhangjinrui@zju.edu.cn}

	\subjclass[2020]{78A46, 35B40, 35R30, 31A10}
	
	\keywords{Inverse scattering, layered medium scattering, time reversal operator, selective focusing, Bayesian inversion}
	% REQUIRED
	\begin{abstract}
		Inverse scattering in layered media has a wide range of applications, examples including geophysical exploration, medical imaging, and remote sensing.  In this paper, we develop a selective focusing method for identifying multiple unknown buried scatterers in a layered medium. The method is derived through the asymptotic analysis of the time reversal operator using the layered Green's function and limited aperture measurements. We begin by showing the global focusing property of the time reversal operator. Then we demonstrate that each small sound-soft particle gives rise to one  significant eigenvalue of the time reversal operator, while each sound-hard particle gives three. The associated eigenfunction generates an incident wave focusing selectively on the corresponding unknown particle. Finally, we employ the time reversal method as an initial indicator and propose an effective Bayesian inversion scheme to reconstruct multiple buried extended scatterers for enhanced resolution. Numerical experiments are provided to demonstrate the efficiency.
	\end{abstract}
	\maketitle
	% REQUIRED
%	\begin{keywords}
%		Inverse scattering, layered medium scattering, time reversal operator, selective focusing, Bayesian inversion
%	\end{keywords}
%	
%	% REQUIRED
%	\begin{MSCcodes}
%		78A46, 35B40, 35R30, 31A10
%	\end{MSCcodes}
	
	\section{Introduction}
	Inverse scattering in layered media has attracted  lots of attention due to its various important applications, including non-destructive testing, geophysical exploration, medical imaging, and many others \cite{delbaryInverseElectromagneticScattering2007,baoTimeHarmonicAcousticScattering2018,wangtaoluWellPosednessUPMLMethod2024}. However, there are several difficulties arising from the inverse scattering problems in layered media.  One is that the measurements are typically taken over a limited aperture, which restricts the amount of information available for reconstructing the scatterers. Another is that the layered Green's function, which is essential for analyzing the inverse problem, is given as a Sommerfeld integral and lacks a closed form. In this paper, we focus on the inverse scattering in a two-layered medium and propose an effective selective focusing method for imaging multiple buried obstacles.
	
	A variety of inversion algorithms have been proposed to solve inverse scattering problems in a layered medium. These methods can be broadly  divided into iterative-type and direct imaging methods. Iterative methods make use of the nonlinear constrained optimization techniques with suitably chosen regularization terms to tackle the ill-posedness  \cite{baoInverseScatteringProblems2015,borgesInverseScatteringReconstruction2020}. While these methods are effective in achieving high resolution reconstructions of unknown scatterers, they are computationally intensive and often require prior knowledge of the scatterer's physical and geometric properties. On the other hand, direct imaging methods have been developed for layered media to avoid expensive iterations~\cite{parkImagingThinDielectric2010,gebauerSamplingMethodDetecting2005,yangDetectingBuriedWavepenetrable2018}. For instance, in \cite{ammariMUSICAlgorithmLocating2005}, the MUSIC (MUltiple SIgnal Classification) algorithm was studied to determine the number and locations of small inclusions buried in the lower half space.  In \cite{liRecoveringMultiscaleBuried2015}, a direct imaging method was studied to recover multiscale buried anomalies with a prior assumption on the shape of extended obstacles. Despite their computational efficiency, direct imaging methods typically provide only low resolution reconstructions in practice, especially in layered media~\cite{liImagingBuriedObstacles2021}.  
	
	In this paper, we develop a hybrid imaging method for the inverse obstacle scattering problems in layered media based on the DORT (French acronym for Decomposition of the Time Reversal Operator) model. DORT method was first developed in \cite{pradaEigenmodesTimeReversal1994} in the context of ultrasonics as a time reversal technique \cite{liDecompositionTimeReversal2012a}. Due to its robustness to noise, this method has been employed in a wide range of applications \cite{pradaTimeReversalTechniques2002}. In the process of the DORT method, one first uses a time reversal mirror, composed of an array of transducers at infinity, to emit an incident wave into the medium containing some unknown obstacles, then conjugates the measured far field and reemits. The time reversal operator $T$ is obtained by iterating this procedure twice. It turns out that the eigensystem of the time reversal operator carries important information on the scatterers contained in the propagation medium.  A rigorous justification of the DORT method was given in \cite{hazardSelectiveAcousticFocusing2004} for the acoustic scattering problem by small sound-soft scatterers. It has been extended to electromagnetics with perfectly conducting boundary conditions ~\cite{antoineFarFieldModeling2008}, small dielectric inhomogeneities \cite{burkardFarFieldModel2013}, and the inverse elastic scattering problems~\cite{laiFastInverseElastic2022,zhangSelectiveFocusingElastic2023}.
	However, all current mathematical justifications are presented in the case of homogeneous media. Due to the difficulties posed by limited aperture and complicated asymptotic analysis,  the problem for layered media is still open. Unlike the homogeneous case, the far field operator with limited aperture in layered media is no longer normal \cite{hazardSelectiveAcousticFocusing2004}. It makes the spectral structure of the time reversal operator differ from that of the far field operator and consequently the spectral analysis becomes much more involved. To bridge this gap, we are concerned with the time-harmonic inverse scattering by multiple buried particles in a layered medium and give a detailed spectral analysis of the time reversal operator through the asymptotic behavior of the layered Green’s function.
	
	In particular, using a limited aperture time-harmonic far field model, we prove that for $a\ll k_-^{-1}\ll L$, where $a$ denotes the size of the scatterer, $k_-$ the wavenumber of the lower half-space and $L$ the minimal distance between the scatterers, each buried sound-soft particle gives rise to one significant eigenvalue while each sound-hard particle gives rise to three eigenvalues. Meanwhile, each associated eigenfunction generates an incident wave that selectively focuses on the corresponding unknown buried scatterers. Furthermore, with the time reversal method as an initial indicator, we propose an effective Bayesian inversion technique \cite{bui-thanhAnalysisInfiniteDimensional2014} to recover multiple extended obstacles for higher resolution in a layered  medium. It is worth mentioning that the asymptotic behavior in the present work is related to MUSIC-type imaging methods developed by Ammari et al~\cite{ammariMUSICAlgorithmLocating2005}. However, instead of projecting onto the noise space, here we explicitly construct Herglotz waves that can focus globally on extended obstacles and selectively on small particles based on the detailed spectral analysis of the time reversal operator. It is physically easier to implement and can be used to achieve energy focusing, while MUSIC-type methods are mainly algorithmic for imaging small particles only.
    
	The paper is organized as follows. In Section 2, we formulate the inverse scattering problems of multiple buried particles in a layered medium in two dimensions. In Section 3, we show the property of global focusing by using the eigenfunctions of the time reversal operator for extended obstacles. Section 4
	is devoted to the selective focusing property of the layered time reversal operator in the imaging of small and distant particles. In Section 5, an effective Bayesian approach is developed to reconstruct multiple buried extended obstacles with the time reversal method as an initial guess. 
	Numerical experiments are presented in Section 6 to illustrate the efficiency of our hybrid method. Finally, the paper is concluded in Section 7.
	
	\section{Problem formulation}
	For $x=(x_1,x_2)\in\mr$, we denote $\lowsp=\{(x_1,x_2)\in\mr|x_2<0\}$ and $\upsp=\{(x_1,x_2)\in\mr|x_2>0\}$ the lower and upper half-spaces, respectively. The interface between the two layers is denoted by $\face=\{(x_1,x_2)\in\mr|x_2=0\}$. 
	
	Consider $M$ well separated impenetrable particles fully embedded in the lower half-space $\lowsp$, denoted by $D_1,D_2,\dots, D_M$. We assume that their boundaries $\Gamma_{1}, \Gamma_{2}, \dots, \Gamma_{M}$ are at least $C^2$ smooth. Let $D=D_{1}\cup D_{2}\cup\cdots\cup D_{M}$ and $\Gamma=\Gamma_{1}\cup\Gamma_{2}\cup\cdots\cup\Gamma_{M}$.  Denote $\nu$ the unit exterior normal vector on $\Gamma$. It also denotes the unit upward normal vector on $\face$, if no ambiguity arises.
	\begin{figure}[!htp]
		\centering
		\includegraphics[scale=0.4]{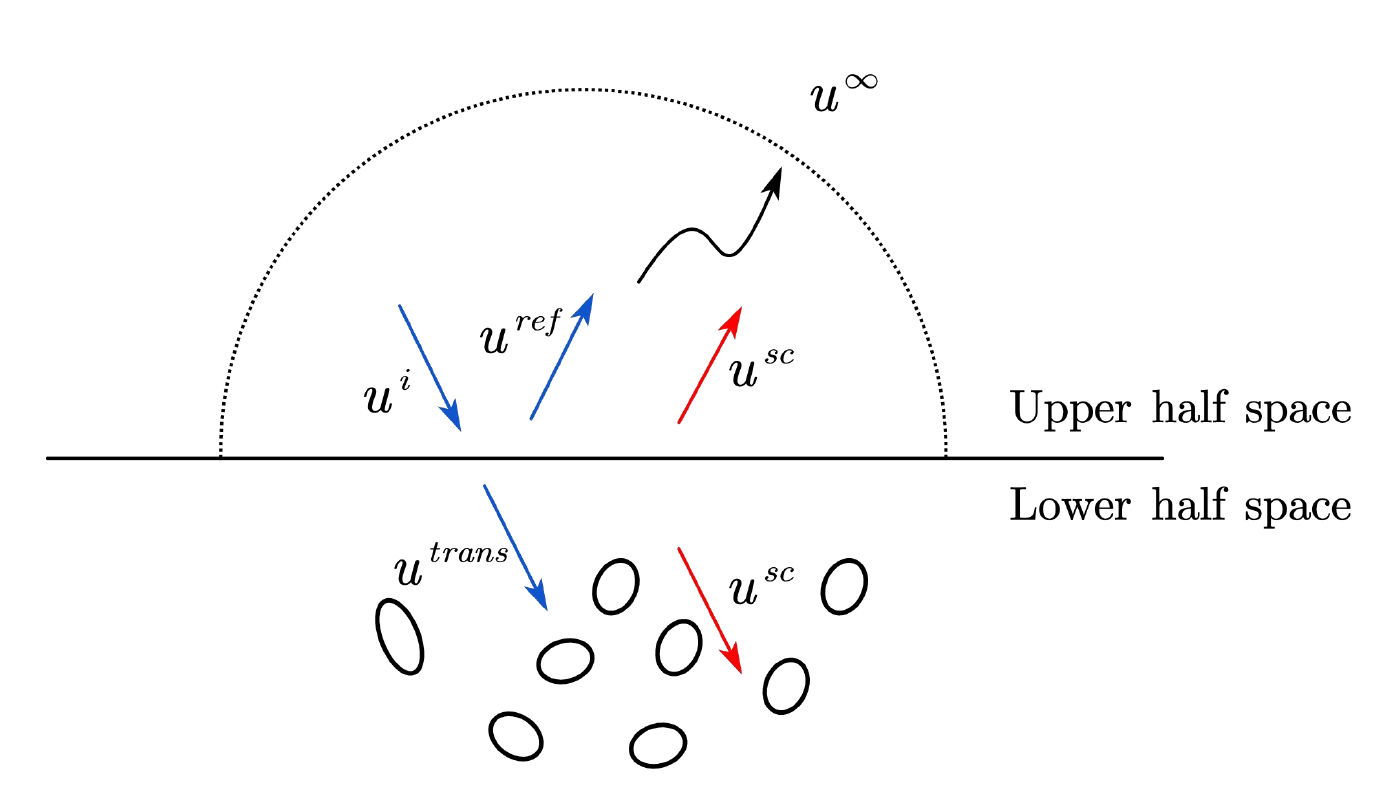}
		\caption{Inverse scattering of multiple buried particles using the far field measurement.}\label{ex11}
	\end{figure} 
	Let the particles be illuminated by a time-harmonic plane wave $\uinc=\me^{\mi k_+x\cdot d}$ propagating in the direction 
	\[{d}= (d_1,d_2)\in \ps,\]
	% \[d = (\cos \theta_d,\sin\theta_d),\qquad \theta_d\in(\pi+\theta_c,2\pi-\theta_c),\]
	where \begin{align*}
		\ps=\{(\cos \theta,\sin\theta)|\theta\in(\pi+\theta_c,2\pi-\theta_c)\},
 \end{align*}
	and $\theta_c$ is defined as
	\[\theta_{c}:=\begin{cases}\arccos(k_{-}/k_{+}),&k_{+}>k_{-},\\ 0,&k_{+}<k_{-},\end{cases}\]
	with $k_{\pm}>0$ being the wavenumbers in $\mr_\pm$, respectively. An illustration is given in Figure \ref{ex11}.
	
	The total field $u$ consists of the background field $u^{bk}$ and the scattered field $\usc$, i.e., $ u=u^{bk}+\usc$. From the Fresnel formula, $u^{bk}$ is given by 
	\[u^{bk}(x,d,k_+,k_-)=\begin{cases}\me^{\mi k_+x\cdot d}+u^{ref}(x,d,k_+,k_-),&x\in\mathbb{R}_+^2,\\ u^{trans}(x,d,k_+,k_-),&x\in\mathbb{R}_-^2,\end{cases}\]
	with 
	\[u^{ref}(x,d,k_+,k_-):=R(d)\me^{\mi k+x\cdot r(d)},\quad u^{trans}(x,d,k_+,k_-):=Q(d)\me^{\mi k_-x\cdot \mv(d)},\]
	where 
	\begin{align}\label{mumu}
		r(d)=(d_1,-d_2),\qquad \mv(d)=\left(\mu d_1,\mathrm{sign}(d_2)\sqrt{1-\mu^2d_1^2}\right) \mbox{ with } \mu = {k_+}/{k_-},
	\end{align}
	are the reflection and transmission directions, respectively. 
	%	 with $\theta_d^t\in(\pi,2\pi)$ satisfying that
	%	$k_{+}\cos\theta_{d} =k_-\cos\theta_{d}^{t}$,
	The corresponding reflection and transmission coefficients $R(d)$ and $Q(d)$ are given by
	\begin{align}\label{transcoef}
		R(d)=\frac{\mu d_2-\mv_2(d)}{\mu d_2+\mv_2(d)},\quad Q(d)=\frac{2\mu d_2}{\mu d_2+\mv_2(d)}.
	\end{align}

	With preparations above, the forward problem of the scattering due to the buried impenetrable obstacles in a layered medium can be formulated as finding the scattered wave field 
	$u^s\in H^1_{loc}(\mathbb{R}^2\setminus\overline{D})$ such that 
	\begin{equation}\label{scatteredfield}
		\begin{cases}
			\Delta\usc+(k_{\pm})^2\usc=0\quad & {\rm in}~
			\mr_{\pm}\setminus\overline{D},\\
			\left[\usc\right]=0,\left[\frac{\partial\usc}{\partial\nu}\right]=0 &{\rm on}~\face,\\
			\mathcal{B}(\usc)=-\mathcal{B}(u^{bk})&{\rm on}~\Gamma,\\
			\lim\limits_{r\to\infty}\int_{\mathbb{S}_{r,\pm}}\left|\frac{\partial \usc}{\partial r}-\mi k_\pm \usc\right|^2ds=0,
		\end{cases}
	\end{equation}
	where $\left[\cdot\right]$ denotes the jump across the interface $\face$, $\mathcal{B}$ denotes one of the following boundary conditions
	\begin{align*}
		\mbox{ \textit{sound-soft}: }\mathcal{B}(\usc)&=\usc\quad{\rm on}~\Gamma,\\
		\mbox{ \textit{sound-hard}: }\mathcal{B}(\usc)&=\frac{\partial \usc}{\partial \nu}\quad{\rm on}~\Gamma,
	\end{align*}
	%\[ \quad  ~,\]
	and $\mathbb{S}_{r,\pm}=\{x\in\mr_{\pm}:|x|=r\}$ denotes the half circle
	with radius $r$ centered at the origin in $\mr_{\pm}$. The well-posedness of the boundary value problem (\ref{scatteredfield}) follows a similar spirit in  \cite{cutzachExistenceUniquenessAnalyticity1998a}, where the case of electromagnetic scattering was considered.
	
	%In addition, the scattered field $\usc$ is required to satisfy
	%the Sommerfeld radiation condition
	%\[
	%	\lim\limits_{r\to\infty}\int_{\mathbb{S}_{r,\pm}}\left|\frac{\partial \usc}{\partial r}-\mi k_\pm \usc\right|^2ds=0,
	%\]
	%where $\mathbb{S}_{r,\pm}=\{x\in\mr_{\pm}:|x|=r\}$ is the half circle
	%of radius $r$ centered at the origin in $\mr_{\pm}$.
	
	Let $\Phi(x, z)$ be the Green’s function of \eqref{scatteredfield}.
	%the unperturbed two-half space $x=(x_1,x_2)$ excited by a point source $z=(z_1,z_2)$ in the lower half-space. 
	One can derive that it is given by \cite{ammariMUSICAlgorithmLocating2005}
	\begin{align}\label{green}
		\begin{aligned}
			\Phi(x,z)=
			\begin{cases}\Phi^{trans}(x,z),&x\in\mathbb{R}_+^2;\\ \Phi^i(x,z)+\Phi^{ref}(x,z),&x\in\mathbb{R}_-^2,x\neq z\end{cases}
		\end{aligned}
	\end{align}
	where
	\begin{align*}
		\Phi^{i}(x, z)&=
		\frac{i}{4} H_{0}^{(1)}\left(k_{-}|x-z|\right),\\
		\Phi^{trans}(x, z)&=\frac{i}{2 \pi} \int_{\mathbb{R}} \frac{e^{i\left(\eta_{+} x_2-\eta_{-} z_2\right)}}{\eta_{+}+\eta_{-}} e^{i \xi \cdot\left(x_1-z_1\right)} d \xi,\\
		\Phi^{ref}(x, z)&=\frac{i}{4 \pi} \int_{\mathbb{R}} \frac{\eta_{-}-\eta_{+}}{\eta_{-}\left(\eta_{+}+\eta_{-}\right)} e^{-i \eta_{-}\left(x_2+z_2\right)} e^{i \xi \cdot\left(x_1-z_1\right)} d \xi,
	\end{align*}
	with  
	$\eta_\pm=\sqrt{k_\pm^2-|\xi|^2}$, and $\Phi^{trans}$ and $\Phi^{ref}$ are given as Sommerfeld integrals~\cite{JGM15}.
	
	 For $x\in\upsp$, as $|x|\rightarrow\infty$,  the asymptotic behavior of the scattered field $\usc$ is given by 
	\begin{equation*}
		\usc(x,d) = \frac{\me^{\mi\pi/4}}{\sqrt{8k_+\pi}}\frac{\me^{\mi k_+ |x|}}{\sqrt{|x|}} u^{\infty}(\hat{x},d)+\mathcal{O}\left(|x|^{-\frac{3}{2}}\right),
	\end{equation*}
	where   $u^{\infty}(\hat{x},d)$ is defined on the upper unit half circle $\ms^+$ with $\hat{x}=x/|x|$ and known as the far field pattern \cite{ammariMUSICAlgorithmLocating2005}. Based on Green's representation formula and the asymptotic behavior of layered Green's function \eqref{green}, it has the integral form given by the following proposition \cite{liRecoveringMultiscaleBuried2015}.
	
	\begin{proposition}\label{farree}
		For layered medium scattering, the  far field in the upper half space is
		
		\begin{align}\label{farre}
			u^{\infty}(\hat{x},d)= Q(\hat{x})\int_{\Gamma}\Bigg\{\usc(y,d)\frac{\partial \me^{-\mi k_-\mv(\hat{x})\cdot y}}{\partial\nu(y)}-\me^{-\mi k_-\mv(\hat{x})\cdot y}\frac{\partial \usc(y,d)}{\partial\nu(y)}\Bigg\}ds(y),\quad \hat{x}\in-\ps,
		\end{align}
        where $Q(\hat{x})$ is given in equation \eqref{transcoef}.
		Furthermore, $u^{\infty}(\hat{x},d)$ is an analytic function on $-\ps\times\ps$ and satisfies the reciprocity relation
		\begin{align}\label{recp}
			u^{\infty}(\hat{x},d)=u^{\infty}(-d,-\hat{x}).
		\end{align}
	\end{proposition}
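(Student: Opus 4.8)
The plan is to read off \eqref{farre} from the Green's representation formula for $\usc$ combined with the far-field asymptotics of the transmitted Green's function, and then to deduce analyticity and reciprocity as corollaries. First I would write, for $x$ in the exterior domain,
\[
\usc(x,d)=\int_{\Gamma}\left\{\usc(y,d)\frac{\partial\Phi(x,y)}{\partial\nu(y)}-\Phi(x,y)\frac{\partial\usc(y,d)}{\partial\nu(y)}\right\}\md s(y),
\]
using the layered Green's function $\Phi$ of \eqref{green} as fundamental solution. The decisive point is that $\Phi$ already satisfies the transmission conditions $[\Phi]=[\partial_\nu\Phi]=0$ on $\face$ and is outgoing in both half-spaces, so that when Green's second identity is applied on $(\mr\setminus\overline D)\cap B_r$ and $r\to\infty$ the contributions along $\face$ cancel while those on the bounding circle vanish by the radiation conditions, leaving only the integral over $\Gamma$. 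Since $y\in\Gamma\subset\lowsp$ while the observation point lies in $\upsp$, the relevant kernel is $\Phi(x,y)=\Phi^{trans}(x,y)$.

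Second, and this is the heart of the argument, I would extract the leading behaviour of $\Phi^{trans}(x,y)$ as $|x|\to\infty$ with $\hat x=x/|x|\in-\ps$. Writing $x=|x|\hat x$ recasts the Sommerfeld integral as $\frac{\mi}{2\pi}\int_{\mathbb{R}}\frac{\me^{-\mi\eta_-y_2-\mi\xi y_1}}{\eta_++\eta_-}\me^{\mi|x|\psi(\xi)}\md\xi$ with phase $\psi(\xi)=\eta_+(\xi)\hat x_2+\xi\hat x_1$. The unique interior stationary point solves $\eta_+'(\xi)\hat x_2+\hat x_1=0$, giving $\xi_0=k_+\hat x_1$, $\eta_+(\xi_0)=k_+\hat x_2$, and $\eta_-(\xi_0)=k_-\sqrt{1-\mu^2\hat x_1^2}$; by the definition \eqref{mumu} of $\mv$ one checks $\xi_0 y_1+\eta_-(\xi_0)y_2=k_-\mv(\hat x)\cdot y$, so the stationary-phase contribution carries exactly the factor $\me^{-\mi k_-\mv(\hat x)\cdot y}$. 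Evaluating the amplitude $1/(\eta_++\eta_-)$ at $\xi_0$ and combining it with the curvature factor $\sqrt{2\pi/(|x|\,|\psi''(\xi_0)|)}$ and the phase factor $\me^{\pm\mi\pi/4}$ must reproduce the prefactor $\frac{\me^{\mi\pi/4}}{\sqrt{8k_+\pi}}\frac{\me^{\mi k_+|x|}}{\sqrt{|x|}}$ together with the transmission coefficient $Q(\hat x)$ of \eqref{transcoef}. Substituting the resulting asymptotics of $\Phi^{trans}$ into the representation formula and comparing with the definition of $u^\infty$ yields \eqref{farre}. Tracking these constants, and checking that $\xi_0=k_+\hat x_1$ stays strictly inside the interval $(-k_-,k_-)$ where $\eta_\pm$ are real — which holds precisely because the critical angle $\theta_c$ is excluded from $-\ps$ — is the most delicate part of the computation.

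Third, analyticity on $-\ps\times\ps$ follows from \eqref{farre}. For fixed $d$ the integrand depends on $\hat x$ only through $Q(\hat x)$ and $\me^{-\mi k_-\mv(\hat x)\cdot y}$, both analytic in $\hat x$ as long as $1-\mu^2\hat x_1^2>0$ and $\mu\hat x_2+\mv_2(\hat x)\neq0$, conditions guaranteed on $-\ps$; differentiation under the integral sign is legitimate by the $C^2$ regularity of $\Gamma$ and the boundedness of the Cauchy data of $\usc$ on $\Gamma$. For the dependence on $d$ I would invoke the analytic dependence of the forward solution $\usc(\cdot,d)$ on the incident direction, which itself follows from the analyticity of $u^{bk}(\cdot,d)$ in $d$ and the bounded invertibility of the boundary-integral operator for \eqref{scatteredfield}. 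Joint analyticity in $(\hat x,d)$ then follows from the separate analyticity in each variable by Hartogs' theorem.

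Finally, for the reciprocity \eqref{recp} I would use the symmetry $\Phi(x,z)=\Phi(z,x)$, read off directly from the Sommerfeld representations in \eqref{green}. Since $\mv(-\hat x)=-\mv(\hat x)$, the kernel $\me^{-\mi k_-\mv(\hat x)\cdot y}$ in \eqref{farre} is, up to the constant $Q(-\hat x)$, the transmitted background field $u^{trans}(y,-\hat x)$ for incidence from $-\hat x\in\ps$. Writing the total fields $u(\cdot,d)=u^{bk}(\cdot,d)+\usc(\cdot,d)$, which satisfy the homogeneous boundary condition $\mathcal B(u)=0$ on $\Gamma$, I would apply Green's second identity to $u(\cdot,d)$ and $u(\cdot,-\hat x)$ on $(\mr\setminus\overline D)\cap B_r$ and let $r\to\infty$: the radiation conditions and the transmission conditions on $\face$ remove the far-field and interface terms, while the symmetry of $\Phi$ recasts the difference $u^\infty(\hat x,d)-u^\infty(-d,-\hat x)$ as the skew-symmetric boundary integral $\int_\Gamma\{u(\cdot,d)\,\partial_\nu u(\cdot,-\hat x)-u(\cdot,-\hat x)\,\partial_\nu u(\cdot,d)\}\,\md s$. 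This integral vanishes identically because either $u=0$ (sound-soft) or $\partial_\nu u=0$ (sound-hard) on $\Gamma$, giving \eqref{recp}. The principal obstacle of the whole proof remains the stationary-phase step of the second stage, where the layered structure enters through the Sommerfeld integral and the precise coefficient $Q(\hat x)$ must be recovered.
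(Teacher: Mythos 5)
Your proposal is correct and follows exactly the route the paper indicates (it gives no in-text proof, deferring to the cited references, but states that \eqref{farre} comes from Green's representation formula together with the asymptotics of the layered Green's function \eqref{green}): your stationary-phase computation does recover the correct point $\xi_0=k_+\hat x_1$, phase $\me^{-\mi k_-\mv(\hat x)\cdot y}$, and prefactor $Q(\hat x)$ matching the normalization $\frac{\me^{\mi\pi/4}}{\sqrt{8k_+\pi}}$, and your reciprocity argument via Green's second identity for the total fields, using $\mv(-\hat x)=-\mv(\hat x)$ and $Q(-\hat x)=Q(\hat x)$, is the standard one and valid here. The only cosmetic imprecision is the appeal to Hartogs' theorem, which requires separate \emph{holomorphy} after complexifying the angular parameters rather than separate real-analyticity, but the intended complexified argument goes through.
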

	
	The inverse problem of acoustic scattering by multiple buried impenetrable  particles is:
	\begin{itemize}
		\item Based on the far field pattern $u^{\infty}(\hat{x},d)$ from different incident directions in $\ps$,  recover the geometric information of $D_1, D_2, \cdots, D_M$, including locations and shapes.
	\end{itemize}
	
	In the following section, we will introduce the time reversal operator in a layered medium and show the global focusing property in the general case. 
	\section{Global focusing in a layered medium}
	\subsection{Time reversal operator}
	Denote $u^{\infty}(\hat{x},{d},f({d}))$ the far field pattern radiated by the incident wave  $\uinc(x,{d},f({d})) = f({d})\me^{\mi k_+ x\cdot{d}},~{d}\in \ps.$
	The Herglotz wave with kernel $f\in L^2(\ps)$ is defined by
	\begin{eqnarray}\label{herglotz}
		\uincf(x) =\int_{\ps} \me^{\mi k_+ {d}\cdot x} f({d})ds_{{d}},
	\end{eqnarray}
	which is a superposition of plane waves and satisfies the Helmholtz equation in $\upsp$. By linearity, the corresponding far field operator $${F}:L^2(\ps)  \rightarrow L^2(-\ps) $$  due to the incident wave $\uincf(x)$ is given by
	\begin{eqnarray}\label{faroperater}
		({F}f)(\hat{x}) = \int_{\ps}u^{\infty}(\hat{x},{d})f({d})ds_{d},\quad
		\hat{x}\in -\ps.
	\end{eqnarray}
    According to the analyticity of the kernel $u^{\infty}(\hat{x},{d})$ and the reciprocity relation \eqref{recp}, it holds the following result.
	\begin{lemma}
		The far field operator ${F}:L^2(\ps)  \rightarrow L^2(-\ps)$ defined by \eqref{faroperater} is a compact operator. Its adjoint operator ${F}^*:L^2(-\ps)  \rightarrow L^2(\ps)$ is given by
		\begin{align}\label{fadj}
			{F}^*f=\overline{R{F}\overline{R f}}\quad\forall f\in L^2(-\ps),
		\end{align}
		where $R$ is the symmetry operator defined by
		${R}f({d})=f(-{d}),{d}\in\ps$.
	\end{lemma}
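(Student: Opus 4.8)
The plan is to establish the two assertions separately: compactness follows from the regularity of the far field pattern, whereas the adjoint formula \eqref{fadj} is a direct computation whose only substantial ingredient is the reciprocity relation \eqref{recp}.

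For compactness, I would first invoke Proposition \ref{farree}, which guarantees that the kernel $u^{\infty}(\hat{x},d)$ is analytic, hence continuous, on $-\ps\times\ps$. To obtain a uniform bound I would read off the representation \eqref{farre}: there $u^{\infty}$ appears as $Q(\hat{x})$ times an integral over the compact boundary $\Gamma$ of the quantities $\usc|_{\Gamma}$, $\partial\usc/\partial\nu|_{\Gamma}$ and the exponential factors $\me^{-\mi k_-\mv(\hat{x})\cdot y}$, all of which are continuous and therefore bounded. Consequently the kernel is bounded on $-\ps\times\ps$, and since $\ps$ and $-\ps$ are arcs of finite length, $u^{\infty}\in L^2(-\ps\times\ps)$. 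Thus $F$ is a Hilbert--Schmidt integral operator, and Hilbert--Schmidt operators are compact; this settles the first claim.

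For the adjoint I would fix the convention $\langle u,v\rangle_{L^2}=\int u\,\overline{v}$ and recall that the $L^2$-adjoint of the integral operator \eqref{faroperater} integrates against the conjugate of the kernel in its first argument, so that for $f\in L^2(-\ps)$ one has $(F^{*}f)(d)=\int_{-\ps}\overline{u^{\infty}(\hat{x},d)}\,f(\hat{x})\,ds_{\hat{x}}$, $d\in\ps$. The goal is to show the right-hand side of \eqref{fadj} reproduces this expression, so I would unwind $\overline{RF\overline{Rf}}$ from the inside out while carefully tracking which arc each intermediate function lives on: $\overline{Rf}$ is the function $\omega\mapsto\overline{f(-\omega)}$ on $\ps$; then $F\overline{Rf}$ lives on $-\ps$; applying $R$ flips the evaluation point back to $\ps$; and the outer conjugation produces
\[
\bigl(\overline{RF\overline{Rf}}\bigr)(d)=\int_{\ps}\overline{u^{\infty}(-d,\omega)}\,f(-\omega)\,ds_{\omega},\qquad d\in\ps.
\]

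The final and most delicate step is to reconcile this with $(F^{*}f)(d)$. Here I would perform the change of variables $\omega\mapsto-\omega$, which maps $\ps$ onto $-\ps$ and preserves arc length, turning the integrand into $\overline{u^{\infty}(-d,-\omega)}\,f(\omega)$ with $\omega\in-\ps$, and then invoke reciprocity: taking $(a,b)=(-d,-\omega)$ in \eqref{recp} gives $u^{\infty}(-d,-\omega)=u^{\infty}(\omega,d)$, which converts the argument order $(-d,\cdot)$ into the order $(\cdot,d)$ demanded by the adjoint kernel, yielding exactly $\int_{-\ps}\overline{u^{\infty}(\omega,d)}\,f(\omega)\,ds_{\omega}=(F^{*}f)(d)$. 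I expect the main source of potential error to be the bookkeeping of the domains $\ps$ versus $-\ps$ and the correct order of the symmetry $R$ and the two conjugations; reciprocity is precisely the ingredient that repairs the mismatched argument order, so pinpointing where to apply it is the crux of the argument.
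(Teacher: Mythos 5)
Your proposal is correct and follows exactly the route the paper indicates for this lemma: compactness from the analyticity (hence boundedness) of the kernel $u^{\infty}$ on $-\ps\times\ps$ via the Hilbert--Schmidt property, and the adjoint identity \eqref{fadj} by unwinding $\overline{RF\overline{Rf}}$, substituting $\omega\mapsto-\omega$, and applying the reciprocity relation \eqref{recp} with $(\hat{x},d)=(\omega,d)$ to match the adjoint kernel $\overline{u^{\infty}(\omega,d)}$. Your domain bookkeeping for the two roles of $R$ and the two conjugations is accurate, so nothing further is needed.
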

%	\begin{proof}
%		The compactness of the integral operator $F$ follows immediately from the analyticity of its kernel $u^{\infty}(\hat{x},{d})$. The adjoint ${F}^*$ of $F$ follows the reciprocity relation \eqref{recp}.
  %       which implies
		% \[K(\hat{x},d)=\overline{u^\infty(d,\hat{x})}=\overline{u^\infty(-\hat{x},-d)}.\]
%	\end{proof}
	It is worth mentioning that the above lemma is an extension of the classical result for acoustic and electromagnetic scattering in homogeneous media \cite{coltonInverseAcousticElectromagnetic2019}.
    
	We are now able to define the time reversal operator $T$.  First, let us measure the far field of the scattered field around the aperture $-\ps$ due to the Herglotz wave $\uinc_f$ with $f\in L^2(\ps) $, and then use the conjugate of the far field pattern as the kernel $g$ of a new Herglotz wave, which implies
	\begin{eqnarray*}
		g = \overline{RFf}\in L^2(\ps).
	\end{eqnarray*}
	The symmetry operator $R$ used here is to reemit the wave from the opposite of the measured direction. Iterating this cycle twice, the time reversal operator $T$ is given by 
	\begin{eqnarray*}
		{T} f = \overline{RF g} = \overline{RF \overline{RFf}}.
	\end{eqnarray*}
	Then one can show that the time reversal operator has the form ${T}={F}^*{F}\in\mathcal{L}(L^2(\ps))$ by equation \eqref{fadj}, and following a similar argument in \cite{hazardSelectiveAcousticFocusing2004}, ${T}$ is the integral operator with kernel
	\begin{align}\label{kne}
		{t}(\beta,\alpha )=\int_{-\ps}A(\gamma,\alpha)\overline{A(\gamma,\beta)}ds_\gamma,
	\end{align}
	where $A(\cdot,\cdot)=u^\infty(\cdot,\cdot)$.
	Moreover, ${T}$ defines a compact positive and self-adjoint operator. Nevertheless, unlike the full aperture far field operator $F$ in homogeneous media \cite{hazardSelectiveAcousticFocusing2004}, the limited aperture far field operator ${F}$ is not normal anymore. Consequently, the eigenfunctions of ${F}$ and ${T}$ are not necessarily the same.
	Thus, the spectral analysis needs to be carried directly on the time reversal operator $T$ instead of the far field operator $F$. Surprisingly, as we are going to see, based on a limited aperture model and a more complicated asymptotic analysis, all focusing results in the homogeneous case \cite{hazardSelectiveAcousticFocusing2004} still hold in the layered medium scattering.
	
	Before we introduce the global focusing property, we need the following lemma.
	\begin{lemma}\label{decay}
		Define \begin{align}\label{b0}
			B_{0}(y):=\int_{\ps}{Q^{2}}(\alpha)\me^{\mi k_-\mv(\alpha)\cdot y}d s_\alpha, \quad B_{1}(y):=\int_{\ps}\mv(\alpha){Q^{2}}(\alpha)\me^{\mi k_-\mv(\alpha)\cdot y}d s_\alpha,
		\end{align}
		where $\mv(\alpha)$ and $Q(\alpha)$ are defined in \eqref{mumu} and \eqref{transcoef}, respectively. For $y\in\mr$ with $|y|$ large enough, we have
		\[|B_0(y)|\leq C_0|y|^{-1/2},|B_1(y)|\leq C_1|y|^{-1/2},\]
		where $C_0,C_1>0$ are both constants independent of $y$.
	\end{lemma}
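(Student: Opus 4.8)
The plan is to read $B_0$ and $B_1$ as oscillatory integrals over an arc of the unit circle and to extract their decay by the method of stationary phase. First I would parametrize $\ps$ by $\alpha=\alpha(\theta)=(\cos\theta,\sin\theta)$ with $\theta\in(\pi+\theta_c,2\pi-\theta_c)$; since $\sin\theta<0$ on this arc one has $\mv(\alpha)=(\mu\cos\theta,-\sqrt{1-\mu^2\cos^2\theta})$, and the crucial observation is that $|\mv(\alpha)|=1$, so as $\theta$ runs over the arc the point $\mv(\alpha)$ traces a sub-arc of the lower unit semicircle. Writing $y=|y|\hat{y}$ with $\hat{y}=(\cos\omega,\sin\omega)$ and reparametrizing by $\mv=(\cos\varphi,\sin\varphi)$, $\varphi\in(\varphi_-,\varphi_+)\subset(\pi,2\pi)$, the phase collapses to $k_-\mv(\alpha)\cdot y=k_-|y|\cos(\varphi-\omega)$, so that
\[
B_0(y)=\int_{\varphi_-}^{\varphi_+}\widetilde{Q}(\varphi)\,\me^{\mi k_-|y|\cos(\varphi-\omega)}\,d\varphi,
\]
where $\widetilde{Q}(\varphi)=Q^2(\alpha(\theta(\varphi)))\,\tfrac{d\theta}{d\varphi}$ absorbs the amplitude together with the Jacobian of the change of variables, and the corresponding expression for $B_1$ carries the extra bounded factor $\mv=(\cos\varphi,\sin\varphi)$.

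The delicate point is the behaviour of $\widetilde{Q}$ at the endpoints $\varphi_\pm$, which correspond to the critical angles $\theta=\pi+\theta_c,\,2\pi-\theta_c$. Using $\tfrac{d\theta}{d\varphi}=\tfrac{\sin\varphi}{\mu\sin\theta}=\tfrac{\sqrt{1-\mu^2\cos^2\theta}}{\mu\,|\sin\theta|}$ together with the explicit coefficient $Q(\alpha)=\tfrac{2\mu\alpha_2}{\mu\alpha_2+\mv_2(\alpha)}$, I would treat the two regimes separately. When $k_+<k_-$ (so $\theta_c=0$) the endpoints are $\theta=\pi,2\pi$, where $Q$ vanishes linearly in $\sin\theta$ while $\tfrac{d\theta}{d\varphi}$ blows up only like $|\sin\theta|^{-1}$, so $\widetilde{Q}\sim|\sin\theta|\to0$. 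When $k_+>k_-$ the endpoints satisfy $\mu^2\cos^2\theta=1$, hence $\mv_2\to0$ and $Q\to2$ stays bounded while $\tfrac{d\theta}{d\varphi}\to0$; again $\widetilde{Q}\to0$. In both cases $\widetilde{Q}$ extends continuously to $[\varphi_-,\varphi_+]$, is smooth in the interior, vanishes at the endpoints, and has $\int|\widetilde{Q}'|<\infty$ (near the endpoints $\widetilde{Q}$ is of order $\sqrt{\varphi-\varphi_\pm}$ or of order $\varphi-\varphi_\pm$, both yielding integrable derivatives); the components of the $B_1$ integrand satisfy the same.

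With a continuous, endpoint-vanishing amplitude of bounded variation in hand, I would finish via the phase $\Psi(\varphi)=\cos(\varphi-\omega)$. Its critical points solve $\sin(\varphi-\omega)=0$, i.e.\ $\varphi\equiv\omega$ or $\omega+\pi$, and they are nondegenerate because $\Psi''(\varphi)=-\cos(\varphi-\omega)=\pm1\neq0$ there; geometrically this is precisely the alignment of the transmission direction $\mv(\alpha)$ with $\pm\hat{y}$. A critical point in $(\varphi_-,\varphi_+)$ contributes a term of size $\oo(|y|^{-1/2})$ by the stationary phase formula, and in its absence repeated integration by parts (with no surviving boundary terms) gives $\oo(|y|^{-1})$. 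To produce a single constant valid for all directions I would instead invoke van der Corput's lemma on a finite partition of the arc: since $\sin^2(\varphi-\omega)+\cos^2(\varphi-\omega)=1$, on each piece either $|\Psi'|$ or $|\Psi''|$ is bounded below by a fixed constant, so van der Corput yields the uniform bound $|y|^{-1/2}$ with a constant depending only on $\|\widetilde{Q}\|_\infty$ and the total variation of $\widetilde{Q}$. Taking the supremum over $\omega$ then gives the desired $C_0,C_1$, independent of $y$.

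The main obstacle is the second step: one must rule out a genuine singularity of the integrand at the critical angles $\theta_c$, where the reparametrization onto the unit circle degenerates, and this forces a careful use of the explicit form of $Q$ so that the competing vanishing of $Q$ (or of the Jacobian) exactly cancels the apparent blow-up. A secondary care point is uniformity in the observation direction $\hat{y}$, which is why I favour the van der Corput formulation over an expansion tied to the location of the critical points.
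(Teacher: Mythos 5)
Your proposal is correct and follows essentially the same route as the paper, which omits the details and simply invokes the stationary phase/van der Corput theory for oscillatory integrals (Stein) as a natural extension of Lemma 2.4 in \cite{liImagingBuriedObstacles2021}. Your write-up in fact supplies precisely the details the paper leaves out — the reparametrization of $\mv(\ps)$ onto an arc of the unit circle, the verification that the vanishing of $Q$ (for $k_+<k_-$) or of the Jacobian $d\theta/d\varphi$ (for $k_+>k_-$) cancels the endpoint degeneracy at the critical angle so the amplitude is continuous with integrable derivative, and the van der Corput formulation giving a constant uniform in the direction $\hat{y}$ — and these checks are accurate.
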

	We omit the detailed proof here, which is a natural extension of Lemma 2.4 in \cite{liImagingBuriedObstacles2021} based on the decaying property of oscillatory integrals \cite{steinHarmonicAnalysisRealVariable1993}, namely, the stationary phase lemma.
	\subsection{Global focusing}
	In this subsection, we give the global focusing properties of imaging general buried impenetrable obstacles through the time reversal operator. 
	
	\begin{theorem}\label{pglobalf}
		Assume $D$ is a sound-soft obstacle and let $\lambda\ne 0$ be an eigenvalue of ${T}$ and ${f}\in  L^2(\ps)$ be an eigenfunction of ${T}$ associated with $\lambda$.  
		Then the transmission field ${u}_{f}^t$, due to the Herglotz incident wave ${u}_{f}^i$ given by equation \eqref{herglotz}, has the following form
		\begin{eqnarray}\label{pag}
			{u}_f^{trans}(x) =   \int_{\Gamma_D}B_0(x-y)h_1(y)ds_y,
		\end{eqnarray}
		for a density function $h_1$ given in 
		the form of equation \eqref{hh1}.
		% , where
		% \begin{align}\label{jmth}
		% 	\widehat{\jmath}(k|x-y|)=\int_{\ps}Q^2(\alpha)\me^{\mi k_-\mv(\alpha)\cdot (x-y)}ds_\alpha
		% \end{align}
	\end{theorem}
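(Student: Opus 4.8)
The plan is to read off the structure of the eigenfunction $f$ directly from the relation $Tf=F^*Ff=\lambda f$ and then substitute it into the explicit expression for the transmitted Herglotz field. The starting point is that the field transmitted into $\lowsp$ by $\uincf$ is the superposition of the single-direction transmitted plane waves $Q(\alpha)\me^{\mi k_-\mv(\alpha)\cdot x}$, namely
\[
u_f^{trans}(x)=\int_{\ps}Q(\alpha)\,\me^{\mi k_-\mv(\alpha)\cdot x}f(\alpha)\,ds_\alpha,\qquad x\in\lowsp .
\]
Comparing this with \eqref{pag} and the definition \eqref{b0} of $B_0$, it suffices to prove that the eigenfunction factorizes through the boundary as
\[
f(\alpha)=Q(\alpha)\int_{\Gamma}\me^{-\mi k_-\mv(\alpha)\cdot y}\,h_1(y)\,ds(y).
\]
Indeed, inserting this into the previous display and applying Fubini turns the inner $\alpha$-integral into $\int_{\ps}Q^2(\alpha)\me^{\mi k_-\mv(\alpha)\cdot(x-y)}\,ds_\alpha=B_0(x-y)$, which is precisely \eqref{pag}. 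So the entire content of the theorem is the factorization of $f$.

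To obtain it, I would first rewrite the far field for a sound-soft obstacle purely in terms of the total field. To the representation of Proposition \ref{farree} I add the identically vanishing integral furnished by Green's second identity applied to the regular background field $u^{bk}(\cdot,\alpha)$ over the interior of $D$ against the test solution $\me^{-\mi k_-\mv(\gamma)\cdot y}$ (which solves the Helmholtz equation with wavenumber $k_-$, since $|\mv(\gamma)|=1$). The two boundary terms then combine into the total field $u=u^{bk}+\usc$ and its normal derivative, and because $u=0$ on $\Gamma$ in the sound-soft case the double-layer term drops out, leaving
\[
u^{\infty}(\gamma,\alpha)=-Q(\gamma)\int_{\Gamma}\me^{-\mi k_-\mv(\gamma)\cdot y}\,\frac{\partial u(y,\alpha)}{\partial\nu(y)}\,ds(y).
\]
I expect this to be the main obstacle: it is exactly the vanishing of the $u\,\partial_\nu(\cdot)$ term that forces the clean single-layer kernel $B_0$ rather than a $B_1$-type kernel (the latter is what one would see for the sound-hard double-layer term), so the bookkeeping here is what distinguishes the two cases. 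Next, using the reciprocity relation \eqref{recp} together with the parity identities $\mv(-\alpha)=-\mv(\alpha)$ and $Q(-\alpha)=Q(\alpha)$ and the fact that $\mv(\alpha),Q(\alpha)$ are real on $\ps$, I transfer the $\alpha$-dependence entirely into the exponential, obtaining
\[
\overline{u^{\infty}(\gamma,\alpha)}=-Q(\alpha)\int_{\Gamma}\me^{-\mi k_-\mv(\alpha)\cdot y}\,\overline{\frac{\partial u(y,-\gamma)}{\partial\nu(y)}}\,ds(y).
\]

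Finally I would insert this into $\lambda f(\alpha)=(F^*Ff)(\alpha)=\int_{-\ps}\overline{u^{\infty}(\gamma,\alpha)}\,(Ff)(\gamma)\,ds_\gamma$ and exchange the order of integration (legitimate since $\Gamma$ is compact, $\ps$ bounded, and all integrands are smooth). The factor $Q(\alpha)\me^{-\mi k_-\mv(\alpha)\cdot y}$ pulls out of the $\gamma$-integration, and dividing by $\lambda\neq0$ yields the desired factorization of $f$ with density
\[
h_1(y)=-\frac{1}{\lambda}\int_{-\ps}\overline{\frac{\partial u(y,-\gamma)}{\partial\nu(y)}}\,(Ff)(\gamma)\,ds_\gamma,
\]
which is equation \eqref{hh1}; here the hypothesis $\lambda\neq0$ is used in an essential way. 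Substituting back into the formula for $u_f^{trans}$ as described above gives \eqref{pag}, and the decay estimate $|B_0(y)|\le C_0|y|^{-1/2}$ from Lemma \ref{decay} then supplies the global-focusing interpretation, since the transmitted field is thereby concentrated near $\Gamma$.
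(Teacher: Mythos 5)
Your proposal is correct and follows essentially the same route as the paper's proof: you reduce \eqref{pag} to the factorization $f(\alpha)=Q(\alpha)\int_{\Gamma}\me^{-\mi k_-\mv(\alpha)\cdot y}h_1(y)\,ds(y)$, obtain it from $f=\lambda^{-1}Tf$ using reciprocity together with the Dirichlet (total-field) representation of the far field, and your density $h_1$ coincides with \eqref{hh1} once $(Ff)(\gamma)$ is expanded via that same representation. The only difference is that you spell out steps the paper compresses into ``using the Dirichlet boundary condition in \eqref{farre}'' — namely the vanishing Green's-identity term for $u^{bk}$ over the interior of $D$ and the parity and realness facts $\mv(-\alpha)=-\mv(\alpha)$, $Q(-\alpha)=Q(\alpha)$, $Q,\mv$ real on $\ps$ — all of which you verify correctly.
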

	\begin{proof}
		Formula \eqref{pag} will be proved if we can write ${f}$ in the form
		\begin{align}\label{pff}
			{f}(\beta)=\int_{\Gamma_D}Q(\beta) \me^{-\mi k_-\mv(\beta)\cdot y}h_1(y)ds_y,
		\end{align}
		for a given density $h_1$. Indeed, if such a relation holds, then
		\begin{align*}
			{u}_f^{trans}(x)=\int_{\ps}{f}(\beta)Q(\beta)\me^{\mi k_-\mv(\beta)\cdot x}ds_\beta
			=\int_{\Gamma_D}h_1(y)\int_{\ps}Q^2(\beta)\me^{\mi k_-\mv(\beta)\cdot (x-y)}ds_\beta ds_y.
		\end{align*}
		To prove \eqref{pff}, we first note that for all $\beta\in\ps$, 
		\begin{align}\label{expf}
			{f}(\beta)=\frac{1}{\lambda}{T}{f}(\beta)=\frac{1}{\lambda}\int_{\ps}{t}(\beta,\alpha){f}(\alpha)ds_\alpha.
		\end{align}
		Given the reciprocity relation in Proposition \ref{farree},
		equation \eqref{kne} can be written as
		\begin{align*}
			{t}(\beta,\alpha)=\int_{-\ps}A(\gamma,\alpha)\overline{A(-\beta,-\gamma)}ds_\gamma.
		\end{align*}
		Using Dirichlet boundary condition in the integral representation \eqref{farre}, we get that
		\begin{align}\label{kne3}
			{t}(\alpha,\beta)=\int_{\Gamma_D}h^\alpha(y)Q(\beta)\me^{-\mi k_-\mv(\beta)\cdot y}ds_y,
		\end{align}
		where the density $h^\alpha$ is given by
		\begin{align*}
			h^\alpha(x)=\int_{-\ps}Q(\gamma)
			\int_{\Gamma_D}{\me^{-\mi k_-\mv(\gamma)\cdot y}}\frac{\partial u_{tot}^\alpha(y)}{\partial \nu(y)} ds_y\frac{\partial \overline{u_{tot}^{-\gamma}(x)}}{\partial \nu(x)}ds_\gamma.
		\end{align*}
		Here, $u_{tot}^\alpha$ represents the total field by an incident plane wave of direction $\alpha$.
		Combining equations \eqref{expf} and \eqref{kne3}, equation \eqref{pff} follows by setting the density 
		\begin{align}\label{hh1}
			h_1(x)=\frac{1}{\lambda}\int_{\ps}h^\alpha(x){f}(\alpha)ds_\alpha.
		\end{align}
	\end{proof}
	
	Following a similar argument, we can deduce the global focusing property for the sound-hard obstacles.
	\begin{theorem}\label{phglobalf}
		Assume $D$ is a sound-hard obstacle and let $\lambda\ne 0$ be an eigenvalue of ${T}$ and ${f}\in  L^2(\ps)$ be an eigenfunction of ${T}$ associated with $\lambda$. Then the transmission field ${u}_{f}^{trans}$, due to the Herglotz incident wave ${u}_{f}^i$ given by equation \eqref{herglotz}, has the following form
		\begin{eqnarray}\label{phag}
			{u}_f^{trans}(x) =   \int_{\Gamma_D}\nu(y)\cdot B_1(x-y)h_2(y)ds_y,
		\end{eqnarray}
		where $h_2(x)=\frac{1}{\lambda}\int_{\ps}h^\alpha(x){f}(\alpha)ds_\alpha$
        with
        	\begin{align*}
			h^\alpha(x)=\int_{-\ps}Q(\gamma)
			\int_{\Gamma_D}\frac{\partial \me^{-\mi k_-\mv(\gamma)\cdot y}}{{\partial \nu(y)}}{ u_{tot}^\alpha(y)} ds_y{ \overline{u_{tot}^{-\gamma}(x)}}ds_\gamma.
		\end{align*}
		% , where
		% \begin{align}\label{jhmth}
		% 	{\jmath}(k|x-y|)=\int_{\ps}\mv(\alpha)Q^2(\alpha)\me^{\mi k_-\mv(\alpha)\cdot (x-y)}ds_\alpha
		% \end{align}
	\end{theorem}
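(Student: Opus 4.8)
The plan is to mirror the structure of the sound-soft proof in Theorem \ref{pglobalf}, adapting each step to the Neumann boundary condition. As before, the desired representation \eqref{phag} will follow once I can write the eigenfunction $f$ in the form
\begin{align*}
    f(\beta)=\int_{\Gamma_D}\frac{\partial \me^{-\mi k_-\mv(\beta)\cdot y}}{\partial\nu(y)}h_2(y)ds_y.
\end{align*}
Indeed, once this holds, substituting into the transmission field and using $\partial_{\nu(y)}\me^{-\mi k_-\mv(\beta)\cdot y}=-\mi k_-\mv(\beta)\cdot\nu(y)\me^{-\mi k_-\mv(\beta)\cdot y}$ lets me interchange the order of integration and collect the $\ps$-integral into $\nu(y)\cdot B_1(x-y)$, exactly as $B_0$ arose from $Q^2\me^{\mi k_-\mv\cdot(x-y)}$ in the sound-soft case. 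This is the routine bookkeeping part.

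The substance is in re-deriving the kernel factorization \eqref{kne3}. First I would again invoke the reciprocity relation \eqref{recp} to rewrite $t(\beta,\alpha)=\int_{-\ps}A(\gamma,\alpha)\overline{A(-\beta,-\gamma)}\,ds_\gamma$. The key change is that now I apply the \emph{sound-hard} (Neumann) condition in the integral representation \eqref{farre}: since $\mathcal{B}(\usc)=\partial_\nu\usc=-\partial_\nu u^{bk}$ on $\Gamma$, the normal-derivative term in \eqref{farre} is controlled by the background field, and the surviving boundary integral pairs the total field $u_{tot}^\alpha$ against the normal derivative of the test exponential, rather than the other way around. Carrying this through, the kernel takes the form $t(\alpha,\beta)=\int_{\Gamma_D}h^\alpha(y)\,\partial_{\nu}\!\big(Q(\beta)\me^{-\mi k_-\mv(\beta)\cdot y}\big)\,ds_y$ with $h^\alpha$ as stated in the theorem, the $\gamma$-integration over $-\ps$ producing the conjugated total field $\overline{u_{tot}^{-\gamma}(x)}$ in the density.

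Finally I would close the loop exactly as in Theorem \ref{pglobalf}: from $f(\beta)=\tfrac1\lambda\int_{\ps}t(\beta,\alpha)f(\alpha)ds_\alpha$, inserting the factorized kernel and defining $h_2(x)=\tfrac1\lambda\int_{\ps}h^\alpha(x)f(\alpha)ds_\alpha$ gives the required representation of $f$, which then yields \eqref{phag}. The main obstacle is the careful placement of the normal derivative when transitioning from the Dirichlet to the Neumann representation: one must verify that the roles of $u_{tot}^\alpha$ and the exponential test function swap correctly in \eqref{farre}, and that the resulting density $h^\alpha$ matches the stated expression with the derivative acting on $y$ inside the $\Gamma_D$ integral and no derivative on the outer conjugated field. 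Everything else is a direct transcription of the sound-soft argument with $B_0$ replaced by $\nu\cdot B_1$.
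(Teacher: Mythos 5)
Your proposal is correct and takes essentially the same route the paper intends: the paper does not write out this proof, saying only that it follows ``a similar argument'' to Theorem \ref{pglobalf}, and your reconstruction---reciprocity, then the Neumann condition together with Green's identity (to kill the $u^{bk}$-terms, since $u^{bk}$ and $\me^{-\mi k_-\mv(\gamma)\cdot y}$ both solve Helmholtz inside $D$) reducing \eqref{farre} to the pairing of $u_{tot}^\alpha$ with $\partial_\nu \me^{-\mi k_-\mv(\gamma)\cdot y}$, followed by the kernel factorization and the eigenvalue relation---is exactly that argument. One small slip: your first display for $f(\beta)$ should carry the factor $Q(\beta)$ (as in \eqref{pff}), which your own kernel factorization later correctly supplies and which is needed to produce the $Q^2$ inside $B_1$; likewise the multiplicative constant $-\mi k_-$ released by the normal derivative is silently absorbed into the density, consistent with the paper's statement.
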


	Based on the asymptotic property in Lemma \ref{decay}, equations \eqref{pag}
	and \eqref{phag} show that the incident wave $\uincf$ generated by the eigenfunction $f$ will focus on the unknown obstacles and decay as $1/\sqrt{r}$ where $r$ is the distance from the boundaries of the obstacles. This is the so-called global focusing property, which is similar to the one in homogeneous media  \cite{hazardSelectiveAcousticFocusing2004}.
	In the following section, we will see that for small and well-resolved particles, a more surprising result named selective focusing also holds in layered media.

	\section{Selective focusing of multiple buried particles}\label{secle}
	In this section, we show that when the particles are small and well-resolved so that multiple scattering is weak, the number of particles contained in $\lowsp$ is related to the number of significant eigenvalues of the time reversal operator $T$, while the associated eigenfunctions can be used to selectively focus on the corresponding unknown particles. To prove this result, we first derive the leading time reversal operator of general-shaped particles in the asymptotic region, then compute its approximate dominant eigensystem, and finally show that the eigenfunctions corresponding to those significant eigenvalues can be used to achieve selective focusing.
	
	\subsection{Asymptotic behavior of  far field pattern}
	We consider the scattering by multiple sparsely distributed small particles by a scaling technique and asymptotic analysis. Let $D_l$, $1\leq l\leq M$, be a collection of simply connected reference scatterers containing the origin with $C^2$ boundary, and the diameter of each $D_l$ is comparable with the wavelength. 
    Denote multiple small scatterers for our reconstruction by
	\begin{align}\label{mulobs0}	D_\rho:=\bigcup\limits_{l=1}^MD_{\rho,l},\quad D_{\rho,l}:=s_l+\rho D_l,\quad s_l\in\lowsp,1\leq l\leq M,
	\end{align}
    % Throughout this subsection, we assume that the wavenumber $k=\oo(1)$. Hence, the size of a scatterer can be characterized by its Euclidean diameter. 
	where $s_l$ denotes the location of the $l$th scatterer and $\rho$ is a small positive scaling factor.
    % in $\lowsp$ with $C^2$ boundary. 
    We further assume that 
	\begin{align}\label{mulobs1}
		L=\min_{l\neq l',1\leq l,l'\leq M} \textrm{dist}(\overline{D_{\rho,l}},\overline{D_{\rho,l'}})\gg 1.
	\end{align}
For $\varphi\in
		C(\partial D_l)$, define the single and double layer boundary operators on $\partial D_l$:
	\begin{align}
		(S_l\varphi)(x) & :=\int_{\partial D_l}\Phi(x,y)\varphi(y)ds_y,\quad x\in\partial D_l,\label{sp}                                                                \\
		(K_l\varphi)(x) & :=\int_{\partial D_l}\frac{\partial \Phi(x,y)}{\partial\nu(y)}\varphi(y)ds_y,\quad x\in\partial D_l,\label{dp}
	\end{align}
	where $\Phi(x,y)$ is the layered Green's function defined by \eqref{green}. The
	adjoint operator $K_l'$ of $K_l$ is given by
	\begin{align}
		(K_l'\varphi)(x):=\int_{\partial D_l}\frac{\partial \Phi(x,y)}{\partial\nu(x)}\varphi(y)ds_y,\quad x\in\partial D_l.\label{asp}
	\end{align}
	It holds the series expansions for $\Phi(x,y)$  \cite{coltonInverseAcousticElectromagnetic2019}:
	\begin{align}\label{teshu}
		\Phi(x,y)=& \Phi_0(x,y)+\Phi^{ref}(x,y)\notag\\
        &+\left(\frac{\mi}{4}-\frac{1}{2\pi}\ln\frac{k}{2}-\frac{C_E}{2\pi}\right)
		+\mathcal{O}({|x-y|^2\ln(|x-y|)}),\quad x\neq y,\\
		\nabla_y\Phi(x,y)=&\nabla_y\Phi_0(x,y)+\nabla_y\Phi^{ref}(x,y)
		+\mathcal{O}({|x-y|\ln(|x-y|)}),\quad x\neq y,\notag
	\end{align}
	where $\Phi_0(x,y):=-(1/2\pi)\ln|x-y|$ denotes the fundamental solution for the Laplace equation, $C_E = 0.577 215 66...$ is Euler’s constant, and $\Phi^{ref}(\cdot,y)$ is an analytic function in $\mr_{\pm}$.
	
	Similar to the definitions of $S_l, K_l, K_l'$, we define the operators ${S}_l^0, {K}_l^0, {K}_l^{0,'}$ in the same way as equations \eqref{sp}, \eqref{dp}
	and \eqref{asp}, but with $\Phi(x,y)$ replaced by $\Phi_0(x,y)$.

	The following lemma indicates that the multiple scattering effect is weak under condition \eqref{mulobs1} \cite{liRecoveringMultiscaleBuried2015}.
	\begin{lemma}\label{nelmul}
		Consider multiple buried particles $D_\rho$ in \eqref{mulobs0} under the condition \eqref{mulobs1}. Then we have
		\[u^\infty(\hat{x};\bigcup\limits_{l=1}^MD_{\rho,l})=\sum_{l=1}^M u^\infty(\hat{x};D_{\rho,l})+\oo\left(\frac{1}{\sqrt{L}}\right).\]
	\end{lemma}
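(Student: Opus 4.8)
The plan is to recast the combined scattering problem as a single boundary integral equation on the full boundary $\Gamma=\bigcup_{l=1}^M\Gamma_l$ and then to treat the coupling between distinct scatterers as a perturbation whose size is controlled by the separation $L$. I would treat the sound-soft case first (the sound-hard case being analogous, with the single layer operator replaced by the regularized hypersingular formulation). Representing the scattered field by a layer potential with density $\varphi$ on $\Gamma$, the transmission and boundary conditions in \eqref{scatteredfield} reduce to an operator equation $\mathcal{A}\varphi=g$, where $\mathcal{A}$ is built from the layered boundary operators $S_l,K_l,K_l'$ of \eqref{sp}--\eqref{asp} together with their cross-scatterer analogues, and $g$ is the restriction of the background data to $\Gamma$. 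Writing everything blockwise, I would split $\mathcal{A}=\mathcal{A}_d+\mathcal{A}_o$ into its block-diagonal (self-interaction) part and its off-diagonal (cross-interaction) part. The point of this splitting is that $\mathcal{A}_d$ is exactly the direct sum of the isolated single-scatterer operators: it is independent of $L$ and, by the well-posedness of each individual problem, boundedly invertible with $\|\mathcal{A}_d^{-1}\|$ uniform in $L$.

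The core of the argument is to bound $\mathcal{A}_o$. Every off-diagonal block has kernel $\Phi(x,y)$ (or a normal derivative thereof) with $x\in\Gamma_l$, $y\in\Gamma_{l'}$ and $l\neq l'$, so that $|x-y|\geq L$ by \eqref{mulobs1}. For the free-space part $\Phi^i=\tfrac{\mi}{4}H_0^{(1)}(k_-|x-y|)$ the large-argument Hankel asymptotics immediately give $|\Phi^i(x,y)|=\oo(L^{-1/2})$; for the reflected part $\Phi^{ref}$ I would run a stationary-phase estimate on its Sommerfeld integral representation---of exactly the type behind Lemma \ref{decay}---to obtain the same $\oo(L^{-1/2})$ rate uniformly over $\Gamma_l\times\Gamma_{l'}$. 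Integrating over the fixed-size boundaries then yields $\|\mathcal{A}_o\|=\oo(L^{-1/2})$.

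Granting these two facts, the remainder is a routine Neumann-series perturbation. Since $\|\mathcal{A}_d^{-1}\mathcal{A}_o\|=\oo(L^{-1/2})<1$ for $L$ large, $\mathcal{A}=\mathcal{A}_d(I+\mathcal{A}_d^{-1}\mathcal{A}_o)$ is invertible with $\|\mathcal{A}^{-1}\|$ uniform in $L$, so $\|\varphi\|\leq C\|g\|$ stays bounded (the data $g$ being uniformly bounded for the fixed reference shapes). Letting $\hat\varphi$ be the block-diagonal solution of $\mathcal{A}_d\hat\varphi=g$, whose $l$-th block is precisely the density of the isolated $l$-th scatterer, subtracting the two equations gives $\varphi-\hat\varphi=-\mathcal{A}_d^{-1}\mathcal{A}_o\varphi$, hence $\|\varphi-\hat\varphi\|=\oo(L^{-1/2})$. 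Because the far field \eqref{farre} is a bounded linear functional of the boundary density, and evaluating it on $\hat\varphi$ reproduces exactly $\sum_{l=1}^M u^\infty(\hat{x};D_{\rho,l})$, transferring the density error through this functional delivers the claimed $\oo(1/\sqrt{L})$ remainder.

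I expect the genuine obstacle to be the uniform decay estimate for the off-diagonal blocks, and specifically the stationary-phase analysis of the reflected kernel $\Phi^{ref}$ (and its normal derivative in the sound-hard case). One must verify that the $L^{-1/2}$ rate holds uniformly in the relative placement of well-separated obstacles buried in $\lowsp$, rather than only for the free-space Hankel term; the reflected contribution involves the effective image distance and requires care that the stationary point of the phase remains nondegenerate as the scatterers move apart.
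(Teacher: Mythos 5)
Your argument is correct and is essentially the paper's route: the paper states Lemma \ref{nelmul} without an inline proof, citing \cite{liRecoveringMultiscaleBuried2015}, and the proof there proceeds exactly as you propose---recasting the multi-particle problem as a block boundary-integral system, isolating the block-diagonal (self-interaction) part whose solution reproduces the isolated far fields, bounding the off-diagonal (cross-interaction) blocks by the $\mathcal{O}(L^{-1/2})$ decay of the layered Green's function (Hankel asymptotics for $\Phi^i$ plus a stationary-phase estimate on the Sommerfeld integral for $\Phi^{ref}$, the same mechanism behind Lemma \ref{decay}), and closing with a Neumann-series perturbation transferred to the far field through the bounded functional \eqref{farre}. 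Your final caveat correctly identifies the one genuinely technical point, the uniform $L^{-1/2}$ decay of the reflected kernel and its normal derivatives over well-separated buried boundaries, which is precisely what the cited stationary-phase analysis supplies.
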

    
% Before our asymptotic analysis for the far field, we give the following lemma that will be further used.

	Now we are in a position to present the conclusion on scattering from multiple small buried scatterers mainly using the technique of integral equations\cite{griesmaierMultifrequencyOrthogonalitySampling2011,liRecoveringMultiscaleBuried2015}. It is noted here that a similar asymptotic formula can be obtained based on  Foldy-Lax model \cite{challaJustificationFoldyLaxApproximation2014}  when the multiple scattering effect is taken into account.
	\begin{theorem}
		Consider multiple buried particles $D_\rho$ in \eqref{mulobs0}.
		Let $	u^\infty(\hat{x};D_\rho)\in L^2(\ps)$ be the far field pattern  excited by a plane wave $\uinc(x)=\me^{\mi k_+ x\cdot d}$. Then, for sufficiently large $L$ and $\rho\rightarrow +0$, the far field pattern corresponding to the sound-soft case satisfies
		\begin{align}\label{farps}
			\begin{aligned}
				u^\infty(\hat{x};D_\rho) = 2\pi ({\ln\rho})^{-1} Q(\hat{x})Q(d)\sum_{l=1}^{M}\me^{-\mi k_-(\mv(\hat{x})-\mv(d))\cdot s_l}
			+\mathcal{O}\left((\ln\rho)^{-2}+L^{-1/2}\right),
			\end{aligned}
		\end{align}
		and the far field pattern corresponding to the sound-hard case satisfies
		\begin{align}\label{asyfar5}
			\begin{aligned}
				u^\infty(\hat{x};D_\rho)=&
				- (k_-)^2\rho^2Q(\hat{x})Q(d)\sum_{l=1}^{M}\left(|D_l|+\mv(\hat{x})\cdot\mathbb{M}_l\mv(d)\right)\me^{-\mi k_-(\mv(\hat{x})-\mv(d))\cdot s_l}+\\
				&\mathcal{O}\left(\rho^3\ln \rho+L^{-1/2}\right),
			\end{aligned}
		\end{align}
		where $|D_l|$ denotes the area of $D_l$ and $\mathbb{M}_l$ given in \eqref{tendef} denotes the polarization tensor corresponding to the sound-hard scatterer $D_l$.
	\end{theorem}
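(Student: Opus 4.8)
The plan is to reduce to a single particle via Lemma~\ref{nelmul}, rescale each $D_{\rho,l}$ to its reference shape $D_l$, and extract the leading density from the rescaled boundary integral equation using the kernel splitting~\eqref{teshu}. By Lemma~\ref{nelmul} we may write $u^\infty(\hat x;D_\rho)=\sum_{l=1}^M u^\infty(\hat x;D_{\rho,l})+\oo(L^{-1/2})$, so it suffices to treat one buried particle and sum the contributions. For each $l$ I would apply the change of variables $x=s_l+\rho\xi$ and $y=s_l+\rho\eta$, under which $ds_y=\rho\,ds_\eta$ and, by~\eqref{teshu}, $\Phi(x,y)=-\tfrac{1}{2\pi}\ln\rho-\tfrac{1}{2\pi}\ln|\xi-\eta|+\Phi^{ref}(s_l,s_l)+\oo(\rho\ln\rho)$. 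Since each $D_{\rho,l}$ lies strictly inside $\lowsp$, the reflected kernel $\Phi^{ref}$ stays analytic near $s_l$ and only perturbs the local Laplace problem at $\oo(1)$. The transmitted background illuminating the particle is $u^{bk}(s_l)=Q(d)\me^{\mi k_-\mv(d)\cdot s_l}$, with gradient $\nabla u^{bk}(s_l)=\mi k_-Q(d)\mv(d)\me^{\mi k_-\mv(d)\cdot s_l}$.

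For the sound-soft case I would represent $\usc=S_l\varphi$ by a single-layer potential and solve $S_l\varphi=-u^{bk}$ on $\partial D_{\rho,l}$. Writing $\tilde\varphi(\eta)=\varphi(s_l+\rho\eta)$, the dominant balance after rescaling is $-\tfrac{\rho\ln\rho}{2\pi}\int_{\partial D_l}\tilde\varphi\,ds_\eta=-u^{bk}(s_l)+\oo(1)$; this logarithmic-capacity scaling forces $\rho\int_{\partial D_l}\tilde\varphi\,ds_\eta=2\pi(\ln\rho)^{-1}Q(d)\me^{\mi k_-\mv(d)\cdot s_l}+\oo((\ln\rho)^{-2})$. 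Inserting this density into the transmitted far field of the single-layer potential, $u^\infty(\hat x;D_{\rho,l})=Q(\hat x)\int_{\partial D_{\rho,l}}\me^{-\mi k_-\mv(\hat x)\cdot y}\varphi(y)\,ds_y$, and expanding $\me^{-\mi k_-\mv(\hat x)\cdot(s_l+\rho\eta)}=\me^{-\mi k_-\mv(\hat x)\cdot s_l}(1+\oo(\rho))$, only the total-charge term survives and yields $2\pi(\ln\rho)^{-1}Q(\hat x)Q(d)\me^{-\mi k_-(\mv(\hat x)-\mv(d))\cdot s_l}$, which is~\eqref{farps}.

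The sound-hard case is dipolar and requires one further order. Here $\partial\usc/\partial\nu=-\partial u^{bk}/\partial\nu$ on $\partial D_{\rho,l}$; representing $\usc=S_l\psi$ turns this into the equation governed at leading order by the Laplace adjoint double-layer, $(\tfrac12 I+K_l^{0,'})\psi=-\partial_\nu u^{bk}$. Two effects of size $\rho^2$ then survive in the far field $Q(\hat x)\int_{\partial D_{\rho,l}}\me^{-\mi k_-\mv(\hat x)\cdot y}\psi\,ds_y$. First, the net charge is fixed by the flux of the Helmholtz background, $\int_{\partial D_{\rho,l}}\partial_\nu u^{bk}\,ds=-k_-^2\int_{D_{\rho,l}}u^{bk}\,dx=-k_-^2\rho^2|D_l|Q(d)\me^{\mi k_-\mv(d)\cdot s_l}+\oo(\rho^3)$, so the monopole part of the far field contributes the area term $|D_l|$. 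Second, the mean-zero data $-\nu\cdot\nabla u^{bk}(s_l)$ drives the exterior Laplace Neumann problem on $D_l$, whose dipole moment defines the polarization tensor $\mathbb{M}_l$ of~\eqref{tendef}; paired with the first-order term of $\me^{-\mi k_-\mv(\hat x)\cdot(s_l+\rho\eta)}$, this produces the dipole term $\mv(\hat x)\cdot\mathbb{M}_l\mv(d)$. Collecting the common prefactor $-(k_-)^2\rho^2Q(\hat x)Q(d)$ and the phase $\me^{-\mi k_-(\mv(\hat x)-\mv(d))\cdot s_l}$ gives~\eqref{asyfar5}.

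The main obstacle is the uniform asymptotic inversion of the rescaled boundary integral equations together with the error bookkeeping. For the sound-soft case one must control the Neumann series for the inverse of the rescaled single-layer operator to certify the $\oo((\ln\rho)^{-2})$ remainder uniformly in $\hat x$ and $d$; for the sound-hard case one must verify that the smooth kernel $\Phi^{ref}$, the $\oo(|x-y|\ln|x-y|)$ remainder in~\eqref{teshu}, the curvature of $\partial D_{\rho,l}$, and the second-order exponential terms all collapse into the stated $\oo(\rho^3\ln\rho)$ error while leaving the $|D_l|$ and $\mathbb{M}_l$ terms exact. Combining these single-particle remainders with the $\oo(L^{-1/2})$ multiple-scattering error from Lemma~\ref{nelmul} then delivers the two stated estimates.
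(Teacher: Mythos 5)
Your proposal reaches both formulas and shares the paper's overall skeleton---reduction to a single particle via Lemma~\ref{nelmul}, rescaling $y=s_l+\rho\eta$ with the kernel splitting \eqref{teshu}, a flux/divergence identity for the monopole term, and the polarization tensor \eqref{tendef} for the dipole term---but your sound-soft argument takes a genuinely different route, and that is where the one real caveat lies. The paper deliberately avoids the pure single-layer ansatz: it represents $\usc$ by a \emph{combined} layer potential with coupling parameter $\varrho=(\rho\ln\rho)^{-1}$, so that the rescaled boundary operator converges to $\frac12 I+K_D^0+\mi M_D^0$, which is boundedly invertible, and the constant $2\pi(\ln\rho)^{-1}$ then falls out of the explicit identity $\int_{\partial D}\bigl(\frac12 I+K_D^0+\mi M_D^0\bigr)^{-1}\mathbbm{1}\,ds=2\pi/\mi$, itself a consequence of $K_D^0\mathbbm{1}=-\mathbbm{1}/2$. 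In your route the rescaled single-layer operator has leading part $-\frac{\rho\ln\rho}{2\pi}$ times the rank-one mean-value functional, so the limiting operator is \emph{not} invertible, and ``controlling the Neumann series for the inverse of the rescaled single-layer operator,'' as you phrase the main obstacle, does not apply as stated; what is actually required is the classical two-dimensional logarithmic-capacity analysis (split the density into its total charge and mean-zero parts, invert the Laplace single layer on the mean-zero complement, expand in powers of $1/\ln\rho$). That route is workable and delivers the same $\mathcal{O}((\ln\rho)^{-2})$ remainder, but it is precisely the degeneracy the paper's coupling parameter is engineered to bypass, so your plan buys a more familiar representation at the cost of a more delicate inversion. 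Your sound-hard treatment is essentially the paper's: deriving the monopole from $\int_{\partial D_{\rho,l}}\partial_\nu u^{bk}\,ds=-k_-^2\int_{D_{\rho,l}}u^{bk}\,dx$ is a cleaner equivalent of the paper's expansion \eqref{normexp2} combined with the identity $\mathrm{div}_\xi\,\mv(d)(\mv(d)\cdot\xi)=1$. One sign slip to fix: the exterior Neumann jump relation gives $\bigl(\frac12 I-K^{0,'}_{D}\bigr)\psi=\partial_\nu u^{bk}$ at leading order, as in the paper; the operator $\frac12 I+K^{0,'}_{D}$ you wrote is the singular interior-Neumann operator (its adjoint annihilates $\mathbbm{1}$ by \eqref{iddd}), so your equation as written is not solvable for generic Helmholtz data---though your subsequent use of \eqref{tendef}, which is defined through $\bigl(\frac12 I-K^{0,'}_{D}\bigr)^{-1}$, shows you intended the correct operator.
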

	\begin{proof}
		According to Lemma \ref{nelmul}, with an error of order $\oo(1/\sqrt{L})$, it suffices to give the asymptotics of the far field pattern for a single component of $D_\rho$ in \eqref{mulobs0}. Without loss of generality, we suppose $D_\rho = z+\rho D$ where $D$ is one of the reference scatterers. 
		
		We first discuss the sound-soft case. For
		$f\in C(\partial {D_\rho})$ and $g\in C(\partial {{D}})$, let us introduce the transforms
		\begin{align*}
			\hat{f}(\xi)=f^\wedge:=f(\rho\xi+z),\xi\in\partial {{D}},\quad
			\check{g}(x)=g^\vee:=g((x-z)/\rho),x\in\partial {D_\rho}.
		\end{align*}	
		We represent the scattered field $\usc(x;{D_\rho})$ as the combined layer potential
		\begin{align*}
			u^{sc}(x;D_\rho)=\int_{\partial{D_\rho}}\left\{\frac{\partial\Phi(x,y)}{\partial\nu(y)}-\mi \varrho\Phi(x,y)\right\}\varphi(y)d s_y,
			\quad x\in\mr\setminus\overline{{D_\rho}},
		\end{align*}
		where the coupling parameter $\varrho$ is chosen to be
		\[\varrho:=(\rho\ln\rho)^{-1}.\]
		By using the homogeneous Dirichlet boundary condition and the jump properties of the integral operators, we see that
		the density function $\varphi\in C(\partial {D_\rho})$ satisfies
		\begin{align*}
			\left(\frac12I+K_{D_\rho}-\mi\varrho S_{D_\rho}\right)\varphi=-{u}^{trans}|_{\partial{D_\rho}},
		\end{align*}
		where $I$ denotes the identity operator. It is noted that the operator $\frac12I+K_{D_\rho}-\mi\varrho S_{D_\rho}:C(\partial{D_\rho})\rightarrow C(\partial{D_\rho})$ is bijective \cite{kressLinearIntegralEquations2014}.
		Hence, using the asymptotic behavior of the Green's function, the far field pattern $u^\infty$ is given by
		\begin{align}\label{far}
			\begin{aligned}
				&u^\infty(\hat{x};{D_\rho}) \\
                =& - Q(\hat{x})\int_{\partial{D_\rho}}\left(\frac{\partial\me^{-\mi k_-\mv(\hat{x})\cdot y}}{\partial\nu(y)}-\mi\varrho\me^{-\mi k_-\mv(\hat{x})\cdot y}\right)
				\left[\left(\frac12I+K_{D_\rho}-\mi\varrho S_{D_\rho}\right)^{-1}\psi\right](y)d s_y\\
				 =&\mi Q(\hat{x})\int_{\partial{D_\rho}}\left(k_-\mv(\hat{x})\cdot\nu(y) +\varrho\right)\me^{-\mi k_-\mv(\hat{x})\cdot y}
				\left[\left(\frac12I+K_{D_\rho}-\mi\varrho S_{D_\rho}\right)^{-1}\psi\right](y)d s_y,
			\end{aligned}
		\end{align}
		where $\psi:= {u}^{trans}|_{\partial{D_\rho}}$.
        
       Now we give an estimate of $\left(\frac12I+K_{D_\rho}-\mi\varrho S_{D_\rho}\right)^{-1}\psi$. For $\phi\in C(D_\rho)$,
        define $M^0_{D}: C(\partial{D})\rightarrow C(\partial{D})$ by
		\[(M^0_{D} \phi)(x)=\frac{1}{2\pi}\int_{\partial{D}}\phi(y)ds_y,\quad x\in\partial{D}.\] 
        Using change of variables in the integrals, we have that
		\begin{align}\label{ss1}
			\begin{aligned}
				(S_{D_\rho}^0\phi)(x)&=\int_{\partial D_\rho} \Phi_0(x,y)\phi(y)ds_y
				=-\frac{1}{2\pi}\int_{\partial D_\rho} \ln(|x-y|)\phi(y)ds_y
				\\
				&=-\frac{1}{2\pi}\int_{\partial D} \ln(\rho |\xi-\eta|)\hat{\phi}(\eta)\rho ds_\eta=-\rho\ln\rho (M_D^0 \hat{\phi})(\xi)+\oo(\rho),
			\end{aligned}
		\end{align}
		\begin{align*}
			\begin{aligned}
				\left(K_{D_\rho}^0\phi\right)(x)&=\int_{\partial {D_\rho}}\frac{\partial \Phi_0(x,y)}{\partial\nu(y)}\phi(y)ds_y=\frac1{4\pi}\int_{\partial{D_\rho}}\frac{\nu(y)\cdot(x-y)}{|x-y|^2}\phi(y)ds_y \\
				&=\frac1{4\pi}\int_{\partial D}\frac{\nu{\left(\eta\right)}\cdot{\left(\xi-\eta\right)}}{\rho\left|\xi-\eta\right|^2}\hat{\phi}{\left(\eta\right)}\rho ds_\eta=\left(K_D^0\hat{\phi}\right)(\xi).
			\end{aligned}
		\end{align*}
		With the expansion in equation \eqref{teshu}, we obtain that as $\rho\rightarrow 0$,
		\begin{align}
			\begin{aligned}
				\left(\left(S_{D_\rho}-S_{D_\rho}^{0}\right)\phi\right)(x)& =\int_{\partial D_\rho}\left[\Phi(x,y)-\Phi_0(x,y)\right]\phi(y)ds_y  \\
				&=\int_{\partial D}\mathcal{O}(1)\hat{\phi}\left(\eta\right)\rho ds_\eta=\mathcal{O}\left(\rho\right),
			\end{aligned}
		\end{align}
        and
		\begin{align}\label{ss4}
			\begin{aligned}
				\left(\left(K_{D_\rho}-K_{D_\rho}^0\right)\phi\right)(x)& =\int_{\partial{D_\rho}}\left(\frac{\partial\left[\Phi(x,y)-\Phi_0(x,y)\right]}{\nu(y)}\right)\phi(y)ds_y  \\
				&=\int_{\partial D}\mathcal{O}(1)\hat{\phi}{\left(\eta\right)}\rho ds_\eta=\mathcal{O}\left(\rho\right).
			\end{aligned}
		\end{align}
        Thus,
		\begin{align}\label{opas}
			\left(\frac12I+K_{D_\rho}-\mathrm{i}\varrho S_{D_\rho}\right)\phi=\left(\left(\frac12I+K_D^0+\mathrm{i}M_D^0\right)\hat{\phi}\right)^\vee+\mathcal{O}\biggl(\frac1{\ln\rho}\biggr),
		\end{align}
		uniformly on compact subsets of $C(\partial D_\rho)$ as $k_-\rho \rightarrow 0$. Since  
		$\frac12I+K_D^0+\mathrm{i}M_D^0$ is  also bijective and its inverse is bounded \cite{kressLinearIntegralEquations2014},
		% Changing the variable $y=z+\rho \xi$ with $\xi\in\partial {{D}}$ in \eqref{far},
		making use of the estimate \eqref{opas} and Theorem 10.1 in \cite{kressLinearIntegralEquations2014}, we find that
		\begin{align}\label{asyfa}
			\begin{aligned}
				&u^\infty(\hat{x};{D_\rho})\\
                =&\mi\rho Q(\hat{x})\int_{\partial {{D}}}\left(k_-\mv(\hat{x})\cdot\nu(y) +\varrho\right)\me^{-\mi k_-\mv(\hat{x})\cdot (z+\rho\xi)}
\left[\left(\frac12I+K_D^0+\mathrm{i}M_D^0\right)^{-1}\hat{\psi}(\xi)\right. \\ &\left.+\mathcal{O}\left(\frac{1}{\ln\rho}\right)\right]d s_\xi.
			\end{aligned}
		\end{align}
		Expanding the exponential function $\xi\rightarrow\me^{-\mi k_-\mv(\hat{x})\cdot(z+\rho\xi)}$ around $z$ in terms of $\rho$ yields
		\begin{align}\label{incex}
			\me^{-\mi k_-\mv(\hat{x})\cdot(z+\rho\xi)}=\me^{-\mi k_-\mv(\hat{x})\cdot z}(1-\mi k_-\mv(\hat{x})\cdot\xi\rho+\mathcal{O}(\rho^2)),\quad \rho\rightarrow0.
		\end{align}
		Inserting \eqref{incex} into \eqref{asyfa} gives
		\begin{align}\label{asyfaii}
			\begin{aligned}
				&u^\infty(\hat{x};{D_\rho})\\
				 =&\mi(\ln\rho)^{-1} Q(\hat{x})\me^{-\mi k_-\mv(\hat{x})\cdot z}\int_{\partial {{D}}}
				\left(\frac12I+K_D^0+\mathrm{i}M_D^0\right)^{-1}\hat{\psi}(\xi)d s_\xi +\oo((\ln\rho)^{-2}).
			\end{aligned}
		\end{align}
		To estimate the integrals on the right hand side of \eqref{asyfaii}, we will investigate the asymptotics for the transmitted wave ${u}^{trans}=Q(d)\me^{\mi k_-\mv(d)\cdot x}$ corresponding to the plane wave $\uinc=\me^{\mi kx\cdot d}$. Expanding the function
		$\xi\rightarrow({u}^{trans})^\wedge(\xi)= {u}^{trans}(\rho\xi+z)$, we have
		\begin{align}\label{normexp}
			({u}^{trans})^\wedge(\xi)=Q(d)\me^{\mi k_-\mv(d)\cdot(z+\rho\xi)}=Q(d)\me^{\mi k_-\mv(d)\cdot z}(1+\mathcal{O}(\rho)),\quad \rho\rightarrow0.
		\end{align}
		Then, it holds
		\begin{align}\label{firstter}
			\begin{aligned}
				&\int_{\partial {{D}}}\left(\frac12I+K_D^0+\mathrm{i}M_D^0\right)^{-1}\hat{\psi}(\xi)d s_\xi\\
				 =&Q(d)\me^{\mi k_-\mv(d)\cdot z}\int_{\partial {{D}}}\left(\frac12I+K_D^0+\mathrm{i}M_D^0\right)^{-1}\mathbbm{1}d s_\xi+\mathcal{O}(\rho),
			\end{aligned}
		\end{align}
		where $\mathbbm{1}$ is the constant function with value 1 on $\partial D$. Inserting \eqref{firstter} into \eqref{asyfaii} yields
		\begin{align}\label{final1}
			\begin{aligned}
				u^\infty(\hat{x};{D_\rho})=
				& \mi(\ln\rho)^{-1} Q(\hat{x})Q(d)\me^{-\mi k_-(\mv(\hat{x})-\mv(d))\cdot z}\int_{\partial {{D}}}\left(\frac12I+K_D^0+\mathrm{i}M_D^0\right)^{-1}\mathbbm{1}d s_\xi
				\\
				& +\oo((\ln\rho)^{-2}).
			\end{aligned}
		\end{align}
        Consider the integral equation
        \begin{align}\label{inttt}
        \left(\frac12I+K_D^0+\mathrm{i}M_D^0\right)\phi=\mathbbm{1},
        \end{align}
		Recalling that \cite{kressLinearIntegralEquations2014}
		\begin{align}\label{iddd}
			{K}_{{D}}^0\mathbbm{1}=\int_{\partial {{D}}}\frac{\partial \Phi_0(x,y)}{\partial\nu(y)}d s_y=-\frac{\mathbbm{1}}{2},
            \end{align}
        which implies $\left(\frac{I}{2}+K^{0}_{D}\right)\mathbbm{1}=0$. Thus, inserting $\phi=c\mathbbm{1}$ into \eqref{inttt}, we have $c=\frac{2\pi}{\mi |D|}$, i.e.,
        \begin{align}\label{rrr}
            \int_{\partial {{D}}}\left(\frac12I+K_D^0+\mathrm{i}M_D^0\right)^{-1}\mathbbm{1}d s_\xi = \frac{2\pi}{\mi}.
        \end{align}
		Combining \eqref{final1} and  \eqref{rrr}, we obtain that
		\begin{align*}
			\begin{aligned}
				u^\infty(\hat{x};{D_\rho})=
				& 2\pi(\ln\rho)^{-1} Q(\hat{x})Q(d)\me^{-\mi k_-(\mv(\hat{x})-\mv(d))\cdot z}+\oo((\ln\rho)^{-2}).
			\end{aligned}
		\end{align*}
		
		For the sound-hard case, we represent the scattered field $\usc(x;{D_\rho})$ as the single layer potential
		\begin{align*}
			u^{sc}(x;{D_\rho})=\int_{\partial{D_\rho}}\Phi(x,y)\varphi(y)d s_y,
			\quad x\in\mr\setminus\overline{{D_\rho}},
		\end{align*}
		with the density function $\varphi\in C(\partial {D_\rho})$ given by
		\begin{align*}
			\varphi=\left(\frac{I}{2}-K'_{D_\rho}\right)^{-1}(\partial_\nu {u}^{trans}|_{\partial{D_\rho}}).
		\end{align*}
		Then, using the asymptotic behavior of the Green's function, the far field pattern $u^\infty$ is given by
		\begin{align}\label{far22}
			u^\infty(\hat{x};{D_\rho}) & = Q(\hat{x})\int_{\partial{D_\rho}}\me^{-\mi k_-\mv(\hat{x})\cdot y}
			\left[\left(\frac{I}{2}-K'_{D_\rho}\right)^{-1}\psi\right](y)d s_y,
		\end{align}
		where $\psi:=\partial_\nu {u}^{trans}|_{\partial{D_\rho}}=\nu\cdot\nabla {u}^{trans}|_{\partial{D_\rho}}$.
		Changing the variable $y=z+\rho \xi$ with $\xi\in\partial {{D}}$ in \eqref{far22} and
		making use of the estimate in Lemma 2.4 in  \cite{liRecoveringMultiscaleBuried2015}, we find that
		\begin{align}\label{asyfar}
			u^\infty(\hat{x};{D_\rho}) & =\rho Q(\hat{x})\int_{\partial {{D}}}\me^{-\mi k_-\mv(\hat{x})\cdot (z+\rho\xi)}
			\left[\left(\frac{I}{2}-K^{0,'}_{D}\right)^{-1}\hat{\psi}(\xi)+\oo(\rho^2\ln\rho)\right]d s_\xi.
		\end{align}
		Expanding the exponential function $\xi\rightarrow\me^{-\mi k_-\mv(\hat{x})\cdot(z+\rho\xi)}$ around $z$ in terms of $\rho$ yields
		\begin{align}\label{incexp}
			\me^{-\mi k_-\mv(\hat{x})\cdot(z+\rho\xi)}=\me^{-\mi k_-\mv(\hat{x})\cdot z}-\mi k_-\rho(\mv(\hat{x})\cdot\xi)\me^{-\mi k_-\mv(\hat{x})\cdot z}+\oo(\rho^2),\quad \rho\rightarrow0.
		\end{align}
		Inserting \eqref{incexp} into \eqref{asyfar} gives
		\begin{align}\label{asyfar2}
			\begin{aligned}
				u^\infty(\hat{x};{D_\rho})=
				& \rho Q(\hat{x})\me^{-\mi k_-\mv(\hat{x})\cdot z}\left(\int_{\partial {{D}}}
				\left(\frac{I}{2}-K^{0,'}_{D}\right)^{-1}\hat{\psi}(\xi)d s_\xi\right)
				\\
				& -\mi k_-\rho^2 Q(\hat{x})\me^{-\mi k_-\mv(\hat{x})\cdot z}\left(\int_{\partial {{D}}}(\mv(\hat{x})\cdot\xi)
				\left(\frac{I}{2}-K^{0,'}_{D}\right)^{-1}\hat{\psi}(\xi)d s_\xi\right)
				\\
				& +\oo(\rho^3\ln \rho).
			\end{aligned}
		\end{align}
		To estimate the integrals on the right hand side of \eqref{asyfar2}, we will investigate the asymptotics for the transmitted wave corresponding to plane wave $\uinc=\me^{\mi k_+d\cdot x}$. Expanding the function
		$\xi\rightarrow(\nabla {u}^{trans})^\wedge(\xi)=\nabla {u}^{trans}(\rho\xi+z)$ yields that
		\begin{align}\label{normexp2}
			(\nabla {u}^{trans})^\wedge(\xi)=Q(d)\mi k_-\mv(d)\me^{\mi k_- \mv(d)\cdot z}
			[ 1+\mi k_-(\mv(d) \cdot\xi)\rho+\oo(\rho^2) ],\quad\rho\rightarrow 0.
		\end{align}
		% Recalling that \cite{kressLinearIntegralEquations2014}
		% \begin{align*}
		% 	{K}_{{D}}^0\mathbbm{1}=\int_{\partial {{D}}}\frac{\partial \Phi_0(x,y)}{\partial\nu(y)}d s_y=-\frac{1}{2},
		% \end{align*}
		From \eqref{iddd}, we see that $\left(\frac{I}{2}-K^{0}_{D}\right)^{-1}\mathbbm{1}=\mathbbm{1}$, and thus
		\begin{align}\label{firstterm}
			\begin{aligned}
				\int_{\partial {{D}}}\left(\frac{I}{2}-K^{0,'}_{D}\right)^{-1}\hat{\psi}(\xi)d s_\xi
				& =\int_{\partial {{D}}}\hat{\psi}(\xi)\left(\frac{I}{2}-K^{0}_{D}\right)^{-1}\mathbbm{1}d s_\xi
				\\
				& =\int_{\partial {{D}}}\nu(\xi)\cdot(\nabla {u}^{trans})^\wedge(\xi)d s_\xi.
			\end{aligned}
		\end{align}
		Inserting \eqref{normexp2} into \eqref{firstterm} and applying Gauss's theorem yields
		\begin{align}\label{term1}
			\begin{aligned}
				& \int_{\partial {{D}}}\left(\frac{I}{2}-K^{0,'}_{D}\right)^{-1}\hat{\psi}(\xi)d s_\xi
				\\
				& =\int_{\partial {{D}}}\nu(\xi)\cdot(Q(d)\mi k_-\mv(d)\me^{\mi k_- \mv(d)\cdot z}
				[ 1+\mi k_-(\mv(d) \cdot\xi)\rho+\oo(\rho^2) ])d s_\xi
				\\
				& =-Q(d)(k_-)^2\rho|{{D}}|\me^{\mi k_- \mv(d)\cdot z }+\oo(\rho^2),
			\end{aligned}
		\end{align}
        %$|{{D}}|$ denotes the area of ${{D}}$ and
		where  the last equality follows from the identity
		\[\textrm{div}_\xi \mv(d)(\mv(d)\cdot\xi)=1.\]
		Again using \eqref{normexp} we can evaluate the second integral over $\partial {{D}}$ on the right hand of \eqref{asyfar2} as follows,
		\begin{align}\label{term2}
			\begin{aligned}
				& \int_{\partial {{D}}}(\mv(\hat{x})\cdot\xi)\left(\frac{I}{2}-K^{0,'}_{D}\right)^{-1}\hat{\psi}(\xi)d s_\xi
				\\=
				& \int_{\partial {{D}}}(\mv{(\hat{x})}\cdot\xi)
				\left(\frac{I}{2}-K^{0,'}_{D}\right)^{-1}(\nu(\xi)\cdot \mi k_-Q(d)\mv(d)\me^{\mi k_-\mv(d) \cdot z })d s_\xi+\oo(\rho)
				\\=
				& -\mi k_-Q(d)\me^{\mi k_-\mv(d)\cdot z}(\mv(\hat{x})\cdot\mathbb{M}\mv(d))+\oo(\rho),
			\end{aligned}
		\end{align}
		where the polarization tensor $\mm$ is defined as
		\begin{align}\label{tendef}
			\mathbb{M}=-\int_{\partial {{D}}}\xi\otimes\left(\frac{I}{2}-K^{0,'}_{D}\right)^{-1}\nu(\xi)
			d s_\xi.
		\end{align}
		Now, combining \eqref{asyfar2}, \eqref{term1} and \eqref{term2} gives the asymptotics \eqref{asyfar5}, namely,
		\begin{align*}
			\begin{aligned}
				u^\infty(\hat{x};{D_\rho})=
				& - (k_-)^2\rho^2Q(d)Q(\hat{x})\left(|{{D}}|+\mv(\hat{x})\cdot\mathbb{M}\mv(d)\right)\me^{-\mi k_-(\mv(\hat{x})-\mv(d))\cdot z}\\&+\oo(\rho^3\ln \rho).
			\end{aligned}
		\end{align*}
	\end{proof}

    	\begin{remark}\label{prop}
		Here the polarization tensor $\mathbb{M}$ can be considered as a $2\times 2$ symmetric
		negative definite matrix \cite{rammWaveScatteringSmall2005}
		\begin{align*}
			\mathbb{M}_{ij}=-\int_{\mr\setminus D}\nabla\phi_i\nabla\phi_jd\xi-\delta_{ij}|D|,
		\end{align*}
		where  $\phi_j$ is the solution to the following exterior Neumann problem
		\[\begin{cases*}
			\Delta \phi_j=0,\\
			\frac{\partial \phi_j}{\partial \nu}=-\nu_j~~\textrm{on}~\partial 
			D,\\
			\phi_j=o(1).
		\end{cases*}\]
		In the case of a sound-hard disk, the polarization tensor becomes: $\mm=-2|D|I$.
	\end{remark}
	\subsection{Selective focusing of sound-soft particles}
    From expansion \eqref{farps}, the leading far field pattern $A^0$ for sound-hard particles by
	excluding the scaling factor and remainder terms is given by
	\begin{align*}
		A^0(\beta,\alpha)= Q(\beta)Q({\alpha})\sum_{j=1}^{M}\me^{-\mi k_{-}(\mathrm{v}(\beta)-\mv(\alpha))\cdot s_{j}},\mbox{ with }\beta\in-\ps,\alpha\in\ps.
	\end{align*}
	Consequently, the leading time reversal operator denoted ${T}^0\in\mathcal{L}(L^2(\ps))$ is the integral operator with kernel $${t}^0(\beta,\alpha)=\int_{-\ps}A^0(\gamma,\alpha)\overline{A^0(\gamma,\beta)}ds_\gamma.$$
	From the remainder terms of \eqref{farps}, it is easy to verify that
	\begin{align}\label{saei}
		\begin{aligned}
			\left\|\left(\frac{\ln\rho}{2\pi}\right)^2 T^\rho-T^0\right\|_{\mathcal{L}(L^2(\ps))}=\oo((\ln\rho)^{-1}+L^{-1/2}),
		\end{aligned}
	\end{align}
	where $T^\rho$ is the original time reversal operator. Since $T^\rho$ is compact and self-adjoint, perturbation theory \cite{katoPerturbationTheoryLinear1995} ascertains the continuity of any finite system of eigenvalues as well as of the associated eigen-projection. Therefore, to get the approximate eigensystem of the time reversal operator, we can study the leading time reversal operator $T^0$ instead of the original $T^\rho$~ \cite{hazardSelectiveAcousticFocusing2004}.
	
	Given ${f}\in L^2(\ps)$, the leading time reversal operator $T^0$ can be represented by
	\begin{align}\label{t0re}
		\begin{aligned}
			{T}^0{f}(\beta)&=\int_{\ps}{t}^0(\beta,\alpha){f}(\alpha)ds_\alpha\\
            &=\int_{\ps}\int_{-\ps}A^0(\gamma,\alpha)\overline{A^0(\gamma,\beta)}ds_\gamma{f}(\alpha)ds_\alpha\\
			&=\int_{-\ps} Q^2(\gamma)ds_\gamma\sum_{p=1}^{M}\int_{\ps}Q({\alpha})\overline{e_p(\alpha)}{f}(\alpha)ds_\alpha Q({\beta}) e_p(\beta)+ \mathcal{O}((k_-L)^{-1/2}),
		\end{aligned}
	\end{align}
	where \[e_l(\alpha)=\me^{-\mi k_-\mv(\alpha)\cdot s_l},\quad\forall \alpha\in\ps, l=1,\dots,M,\]
	and the interaction terms are contained in the residual based on Lemma \ref{decay}, i.e.,
	\[\int_{-\ps} Q^2(\gamma)\me^{-\mi k_{-}\mathrm{v}(\gamma)\cdot ({s_p-s_{q}}})ds_\gamma=\mathcal{O}((k_-|s_p-s_{q}|)^{-1/2}),~p\neq q.\]
	
	Let us introduce functions $g_l(\alpha),{h}_{l,j}(\alpha), l=1,\cdots,M, j=1,2,$ defined by
	\begin{align}\label{gl}
		{g_l}(\alpha)=Q(\alpha){e}_l(\alpha),\qquad
		{h}_{l,j}(\alpha)=\boldsymbol{\mathrm{e}}_j\cdot \mv(\alpha) Q(\alpha){e}_l(\alpha),\quad\forall \alpha\in\ps,
	\end{align}
	where $\boldsymbol{\mathrm{e}}_j$ denotes the basis in $\mr, j=1,2$.
	% \begin{remark}\label{rich}
		We note that $g_l(\alpha)$ and $h_{l,q}(\alpha)$ are exactly the far fields corresponding to monopole and dipole sources located at the unknown particles in $\lowsp$, respectively. More precisely, $g_l(\alpha)=Q(\alpha)\me^{-\mi k_-\mv(\alpha)\cdot s_l}$ is the far field in the direction $\alpha\in\ps$ due to the point source $\Phi(\cdot,s_l)$ located at $s_l$ in $\lowsp$, while
		$h_{l,q}(\alpha)=(\boldsymbol{\mathrm{e}}_q\cdot\mv(\alpha) )Q(\alpha)\me^{-\mi k_-\mv(\alpha)\cdot s_l}$ represents the far field in the direction $\alpha\in\ps$
		due to the dipole source $\frac{1}{\mi k_-}\nabla \Phi(\cdot,s_l)\cdot \boldsymbol{\mathrm{e}}_q$ of
		direction $\boldsymbol{\mathrm{e}}_q$ located at $s_l$ in $\lowsp$.
	% \end{remark}
	
	\begin{theorem}
		The leading time reversal operator $T^0$ is a positive self-adjoint operator with finite rank $M$. Furthermore, when $k_-L\rightarrow\infty$, the family $\{g_l: l=1,\dots,M\}$ 
		constitutes an approximate basis of eigenfunctions corresponding to nonzero eigenvalues of ${T}^0$ with an error of order $\mathcal{O}((k_-L)^{-1/2})$.
	\end{theorem}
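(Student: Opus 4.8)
The plan is to read off the algebraic structure of $T^0$ directly from its kernel and then reduce everything to the oscillatory-integral decay of Lemma~\ref{decay}. Substituting $A^0(\gamma,\alpha)=Q(\gamma)Q(\alpha)\sum_{j}\me^{-\mi k_-(\mv(\gamma)-\mv(\alpha))\cdot s_j}$ into $t^0$ and using that $Q$ is real-valued on $\ps$ (so that $\overline{g_j(\alpha)}=Q(\alpha)\me^{\mi k_-\mv(\alpha)\cdot s_j}$), I would factor $A^0(\gamma,\alpha)=\sum_{j=1}^M\widetilde g_j(\gamma)\overline{g_j(\alpha)}$ with $\widetilde g_j(\gamma):=Q(\gamma)\me^{-\mi k_-\mv(\gamma)\cdot s_j}$, $\gamma\in-\ps$. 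Inserting this into $t^0(\beta,\alpha)=\int_{-\ps}A^0(\gamma,\alpha)\overline{A^0(\gamma,\beta)}\,ds_\gamma$ and interchanging the finite sum with the $\gamma$-integral gives
\[
t^0(\beta,\alpha)=\sum_{p,j=1}^M \widetilde G_{pj}\,g_p(\beta)\,\overline{g_j(\alpha)},\qquad \widetilde G_{pj}:=\int_{-\ps}\widetilde g_j(\gamma)\overline{\widetilde g_p(\gamma)}\,ds_\gamma .
\]
Thus $T^0=\Phi\widetilde G\Phi^*$, where $\Phi:\mathbb{C}^M\to L^2(\ps)$, $\Phi\mathbf c=\sum_p c_p g_p$, is the synthesis operator and $\widetilde G=(\widetilde G_{pj})$ is the Hermitian positive-semidefinite Gram matrix of the $\widetilde g_j$ in $L^2(-\ps)$. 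From this factorization positivity, self-adjointness, and rank $\le M$ are immediate: $(T^0)^*=\Phi\widetilde G^*\Phi^*=T^0$, $\langle T^0 f,f\rangle=\langle\widetilde G\Phi^*f,\Phi^*f\rangle\ge 0$, and the range lies in $\operatorname{span}\{g_1,\dots,g_M\}$.

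Next I would pin down the two $M\times M$ Gram matrices using Lemma~\ref{decay}. The diagonal entries are configuration-independent, $\widetilde G_{jj}=\int_{-\ps}Q^2\,ds=:c_1>0$ and $\langle g_j,g_j\rangle_{L^2(\ps)}=B_0(0)=\int_{\ps}Q^2\,ds=:c_2>0$, since $|\me^{\pm\mi k_-\mv\cdot s_j}|=1$. The off-diagonal entries are exactly the oscillatory integrals controlled by Lemma~\ref{decay}: for $p\ne j$ one has $\widetilde G_{pj}=\int_{-\ps}Q^2(\gamma)\me^{-\mi k_-\mv(\gamma)\cdot(s_j-s_p)}\,ds_\gamma$ and $\langle g_j,g_p\rangle=B_0(s_p-s_j)$, both of size $\oo((k_-|s_j-s_p|)^{-1/2})=\oo((k_-L)^{-1/2})$ because $|s_j-s_p|\ge L$. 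Hence both Gram matrices equal a positive multiple of the identity plus a perturbation of operator norm $\oo((k_-L)^{-1/2})$, so for $k_-L$ large they are positive definite; in particular $\{g_l\}$ is linearly independent, $\Phi$ is injective, $\widetilde G$ is invertible, and $T^0=\Phi\widetilde G\Phi^*$ has rank exactly $M$.

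Finally, for the eigenfunction statement I would separate the diagonal ($p=j$) contribution, which is precisely the rank-$M$ operator displayed in \eqref{t0re},
\[
T^0 f=c_1\sum_{p=1}^M\langle f,g_p\rangle_{L^2(\ps)}\,g_p+Rf,
\]
where $R$ collects the $p\ne j$ terms; since $R$ has kernel $\sum_{p\ne j}\widetilde G_{pj}\,g_p(\beta)\overline{g_j(\alpha)}$ with bounded $g_p,g_j$ and $|\widetilde G_{pj}|=\oo((k_-L)^{-1/2})$ over a finite-measure aperture, one gets $\|R\|_{\mathcal L(L^2(\ps))}=\oo((k_-L)^{-1/2})$. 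Applying this to $g_l$ and isolating the diagonal term,
\[
T^0 g_l=c_1 c_2\,g_l+c_1\sum_{p\ne l}\langle g_l,g_p\rangle g_p+Rg_l,
\]
and bounding the last two terms by Lemma~\ref{decay} yields $\|T^0 g_l-c_1c_2\,g_l\|=\oo((k_-L)^{-1/2})$. Thus each $g_l$ is an approximate eigenfunction associated with the common leading eigenvalue $c_1c_2$, and since the $g_l$ are almost orthogonal, linearly independent, and span the range of $T^0$, they form an approximate eigenbasis of the nonzero spectrum with the claimed error.

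I expect the main obstacle to be the control of the off-diagonal interaction terms: these are exactly the Sommerfeld-type oscillatory integrals whose $\oo((k_-L)^{-1/2})$ decay is supplied by the stationary-phase estimate of Lemma~\ref{decay}, and one must check that this pointwise-in-kernel decay upgrades to an operator-norm bound (immediate here because the aperture $-\ps$ has finite measure and $Q$ is bounded). A conceptual subtlety is that all $M$ leading eigenvalues coincide at leading order ($\approx c_1c_2$), so there is no spectral gap; consequently one cannot identify each genuine eigenfunction with a single $g_l$, and the correct assertion is only that the family $\{g_l\}$ collectively constitutes an approximate basis of the nearly degenerate eigenspace, which is precisely what the $\oo((k_-L)^{-1/2})$ error quantifies.
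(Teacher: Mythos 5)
Your proposal is correct, and its spectral core coincides with the paper's: both reduce $T^0$ to the separable leading kernel, identify the diagonal terms as the rank-$M$ part, control the interaction terms $\int_{-\ps}Q^2(\gamma)\me^{-\mi k_-\mv(\gamma)\cdot(s_p-s_q)}ds_\gamma$ and $B_0(s_p-s_q)$ by the stationary-phase decay of Lemma~\ref{decay}, and arrive at the same relation $T^0g_l=\lambda g_l+\mathcal{O}((k_-L)^{-1/2})$ with $\lambda=\int_{-\ps}Q^2\,ds_\gamma\int_{\ps}Q^2\,ds_\alpha$ (your $c_1c_2$). Where you genuinely diverge is in the structural part. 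The paper gets positivity and self-adjointness from the bilinear form, reads off rank $\leq M$ from the representation \eqref{t0re}, and obtains linear independence of the $g_l$ from the separate Lemma~\ref{indep}, a Rellich-type uniqueness argument that holds unconditionally for distinct source points. You instead factor $T^0=\Phi\widetilde G\Phi^*$ through the Gram matrix of the auxiliary family $\widetilde g_j$ on $L^2(-\ps)$, which makes positivity, self-adjointness and rank $\leq M$ immediate, and you recover linear independence (hence rank exactly $M$) by showing both Gram matrices are a positive multiple of the identity plus an $\mathcal{O}((k_-L)^{-1/2})$ perturbation. This buys a self-contained argument that avoids Lemma~\ref{indep} entirely and makes the near-degeneracy of the $M$ leading eigenvalues explicit — your observation that all approximate eigenvalues coincide at leading order, so only the collective eigenspace statement (not an eigenfunction-by-eigenfunction matching) is justified, is a correct refinement that the paper leaves implicit. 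The trade-off is that your diagonal-dominance route certifies rank $M$ only for $k_-L$ sufficiently large, whereas the paper's Rellich argument gives linear independence of $\{g_l\}$ for any fixed configuration of distinct $s_l$; in the theorem's asymptotic regime $k_-L\to\infty$ this makes no difference, and your check that the realness of $Q$ on $\ps$ legitimizes the factorization $A^0(\gamma,\alpha)=\sum_j\widetilde g_j(\gamma)\overline{g_j(\alpha)}$, together with the upgrade from kernel decay to operator-norm decay via the finite-measure aperture, closes the steps the paper passes over silently.
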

	\begin{proof}
		The bilinear form associated with $T^0$ is
		\[\left(T^0 f(\alpha),f'(\beta)\right)=\int_{-\ps}\int_{\ps}A^0(\gamma,\alpha)f(\alpha)ds_\alpha\int_{\ps}\overline{A^0(\gamma,\beta)f'(\beta)}ds_\beta ds_\gamma,\]
		which is clearly positive and self-adjoint.

		From equation \eqref{t0re}, we readily get that the range of $T^0$ is approximately spanned by $\{g_l: l=1,\dots,M\}$, which indicates the number of eigenfunctions corresponding to the nonzero eigenvalues of $T^0$ is at most $M$. 
		
		On the other hand, it holds
		\begin{align*}
			{T}^0{g_l}(\beta)&=\int_{-\ps} Q^2(\gamma)ds_\gamma\sum_{p=1}^{M}\int_{\ps}Q({\alpha})\overline{e_p(\alpha)}Q(\alpha){e}_l(\alpha)ds_\alpha Q({\beta}) e_p(\beta)+ \mathcal{O}((k_-L)^{-1/2})\\
			&=\int_{-\ps} Q^2(\gamma)ds_\gamma\int_{\ps} Q^2({\alpha})ds_\alpha Q({\beta}) e_l(\beta)+ \mathcal{O}((k_-L)^{-1/2})\\
			&={\lambda_l}{g_l}(\beta)+\mathcal{O}((k_-L)^{-1/2}),
		\end{align*}
		where ${\lambda_l}=\int_{-\ps} Q^2(\gamma)ds_\gamma\int_{\ps} Q^2({\alpha})ds_\alpha$. Combining the linear independence of $g_l(\alpha)$, $l=1,\dots,M$, which follows from Lemma \ref{indep},
		we prove that $\{g_l: l=1,\dots,M\}$ constitutes an approximate basis of eigenfunctions corresponding to the nonzero eigenvalues of ${T}^0$.
	\end{proof}
	\begin{theorem}
		For $1\leq l\leq M$, the incident Herglotz wave associated with the approximate eigenfunction ${g_l}$ selectively focuses on the $l$th particle.
		
	\end{theorem}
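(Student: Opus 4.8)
The plan is to compute the transmission field generated by the Herglotz wave with kernel $g_l$ in closed form and then read off its concentration properties directly from the decay estimate in Lemma \ref{decay}. First, exactly as in the proof of Theorem \ref{pglobalf}, each plane wave $\me^{\mi k_+ \beta\cdot x}$ with $\beta\in\ps$ transmits into the lower half-space as $Q(\beta)\me^{\mi k_-\mv(\beta)\cdot x}$, so the transmission field associated with the Herglotz incident wave of kernel $g_l$ is
\[
u_{g_l}^{trans}(x)=\int_{\ps}g_l(\beta)\,Q(\beta)\,\me^{\mi k_-\mv(\beta)\cdot x}\,ds_\beta.
\]
Substituting the explicit form $g_l(\beta)=Q(\beta)\me^{-\mi k_-\mv(\beta)\cdot s_l}$ from \eqref{gl} and comparing with the definition \eqref{b0} of $B_0$, I would identify
\[
u_{g_l}^{trans}(x)=\int_{\ps}Q^2(\beta)\,\me^{\mi k_-\mv(\beta)\cdot(x-s_l)}\,ds_\beta=B_0(x-s_l).
\]

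Next I would quantify the magnitude of this field at the particle locations. At the $l$th particle one has $u_{g_l}^{trans}(s_l)=B_0(0)=\int_{\ps}Q^2(\alpha)\,ds_\alpha$; since $Q(\alpha)>0$ is real and positive for $\alpha\in\ps$ (the definition of $\ps$ via $\theta_c$ excludes the evanescent regime, so $\mv_2(\alpha)$ is real and negative, forcing $Q>0$ in \eqref{transcoef}), this is a fixed positive constant of order $\oo(1)$, independent of $L$. For any other particle $s_{l'}$ with $l'\neq l$, the separation assumption \eqref{mulobs1} gives $|s_{l'}-s_l|\geq L$, and Lemma \ref{decay} yields
\[
|u_{g_l}^{trans}(s_{l'})|=|B_0(s_{l'}-s_l)|\leq C_0\,|s_{l'}-s_l|^{-1/2}\leq C_0\,L^{-1/2}.
\]
Thus the transmission field attains an $\oo(1)$ value at $s_l$ while remaining of order $\oo(L^{-1/2})$ at every other particle; combined with the global $1/\sqrt{r}$ decay of $B_0$ away from its center (again Lemma \ref{decay}), the field is concentrated around $s_l$ alone, which is precisely selective focusing on the $l$th particle.

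The argument is essentially direct once the transmission field is recognized as $B_0(x-s_l)$, after which Lemma \ref{decay} supplies the contrast between the $\oo(1)$ peak at $s_l$ and the $\oo(L^{-1/2})$ values at the remaining particles; I do not expect a genuine obstacle beyond this identification and the bookkeeping of constants. The only place where a little extra care would be needed is if one wishes to phrase the result for the true eigenfunction $f_l$ of $T^\rho$ rather than the approximate $g_l$: there one would control $\|f_l-g_l\|_{L^2(\ps)}=\oo((k_-L)^{-1/2})$ using the perturbation estimate \eqref{saei} together with the continuity of the eigenprojections, and then propagate this error through the bounded Herglotz-to-transmission map to conclude that the focusing behavior is preserved up to an $\oo((k_-L)^{-1/2})$ correction.
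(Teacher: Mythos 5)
Your proposal is correct and takes essentially the same route as the paper's proof: you identify the transmitted field of the Herglotz wave with kernel $g_l$ as $u^{trans}_{g_l}(x)=\int_{\ps}Q^2(\alpha)\me^{\mi k_-\mv(\alpha)\cdot(x-s_l)}ds_\alpha=B_0(x-s_l)$ and invoke Lemma \ref{decay} to obtain the $\mathcal{O}((k_-|x-s_l|)^{-1/2})$ decay away from $s_l$, which is exactly the paper's (one-line) argument. Your extra bookkeeping---checking $Q>0$ on $\ps$ so that $B_0(0)=\int_{\ps}Q^2(\alpha)\,ds_\alpha$ is a genuine $\mathcal{O}(1)$ peak, and the remark on transferring the conclusion to true eigenfunctions of $T^\rho$ via the perturbation bound \eqref{saei}---is sound and slightly more thorough than the paper's proof, but does not constitute a different approach.
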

	\begin{proof}
		Due to the incident wave $u^i_{{g_l}}=\int_{\ps} Q(\alpha){e}_l(\alpha)\me^{i k_+\alpha\cdot x}ds_\alpha$, the transmitted wave can be expressed as follows
		\[{u}^{trans}_{{g_l}}=\int_{\ps} Q^2(\alpha)\me^{i k_-\mv(\alpha)\cdot (x-s_l)}ds_\alpha,\]
		which decays rapidly away from the $l$th particle as $\mathcal{O}((k_-|x-s_{l}|)^{-\frac{1}{2}})$.
	\end{proof}
	
	\subsection{Selective focusing of sound-hard particles}
	Similar to the sound-soft case, taking the leading term in \eqref{asyfar5}, we derive the leading far field pattern $A^0$ as
	\begin{align*}
		A^0(\beta,\alpha)= Q(\beta) Q({\alpha})\sum_{j=1}^{M}\me^{-\mi k_{-}(\mathrm{v}(\beta)-\mv(\alpha))\cdot s_{j}}\bigg(\mathrm{v}(\beta)\cdot \mm_j\mathrm{v}(\alpha)+|D_j|\bigg),\quad \beta\in-\ps,\alpha\in\ps.
	\end{align*}
	Given ${f}\in L^2(\ps)$, the leading time reversal operator can be represented by
	\begin{align}\label{stt}
		\begin{aligned}
			{T}^0{f}(\beta)&=\int_{\ps}{t}^0(\beta,\alpha){f}(\alpha)ds_\alpha=\int_{\ps}\int_{-\ps}A^0(\gamma,\alpha)\overline{A^0(\gamma,\beta)}ds_\gamma{f}(\alpha)ds_\alpha\\
			%		&=\int_{\ps}\int_{-\ps}\sum_{p=1}^{M} Q(\gamma) Q({\alpha})\bigg(\mathrm{v}(\gamma)\cdot \mm_p\mathrm{v}(\alpha)+|D_p|\bigg)\me^{-\mi k_{-}(\mathrm{v}(\gamma)-\mv(\alpha))\cdot s_{p}}\\
			%		&~~~~~~~~~~\sum_{q=1}^{M}         Q(\gamma) Q({\beta})\bigg(\mathrm{v}(\gamma)\cdot \mm_q\mathrm{v}(\beta)+|D_q|\bigg)\me^{\mi k_{-}(\mathrm{v}(\gamma)-\mv(\beta))\cdot s_{q}}ds_\gamma{f}(\alpha)ds_\alpha\\
			&=\sum_{q=1}^{M}\bigg\{\sum_{p=1}^{M}\int_{\ps}\Bigg[\int_{-\ps}\bigg( Q(\gamma) Q({\alpha})\big(\mathrm{v}(\gamma)\cdot \mm_p\mathrm{v}(\alpha)+|D_p|\big)\bigg)\\
			&~~~~\bigg( Q(\gamma) Q({\beta})\big(\mathrm{v}(\gamma)\cdot \mm_q\mathrm{v}(\beta)+|D_q|\big)\bigg)e_p(\gamma)\overline{e_q(\gamma)}ds_\gamma\Bigg]\overline{e_p(\alpha)}{f}(\alpha)ds_\alpha\bigg\} e_q(\beta),
		\end{aligned}
	\end{align}
	%where \[e_l(\alpha)=\me^{-\mi k_-\mv(\alpha)\cdot s_l},~l=1,\dots,M,\]
	Using the same conclusion as equation \eqref{saei}, we just need to find the eigensystem of ${T}^0$ in the sound-hard case. 
	\begin{theorem}
		When $k_-L\rightarrow\infty$, $\mathcal{A}_l$  given by \eqref{aaaa} with $1\leq l\leq M$ is an approximate invariant subspace for ${T}^0$. Furthermore, there exist three linearly independent approximate eigenfunctions $f_{l,q},q=1,2,3$ in $\mathcal{A}_l$, which are all linear combinations of ${g}_l$ and ${h}_{l,j}, j=1,2,$ with an error of order $\mathcal{O}((k_-L)^{-1/2})$.
	\end{theorem}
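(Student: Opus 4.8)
The plan is to carry out, for the sound-hard case, the same strategy as for sound-soft particles, except that each particle now contributes a three-dimensional block instead of a one-dimensional one. I would first localize the operator in \eqref{stt}. Since $\mm_l$ is symmetric and $Q,\mv$ are real on the relevant directions, the single-particle kernel factors as
\[A^0_l(\gamma,\alpha)=Q(\gamma)e_l(\gamma)\,Q(\alpha)\overline{e_l(\alpha)}\big(\mv(\gamma)\cdot\mm_l\mv(\alpha)+|D_l|\big),\]
and in the double sum over $(p,q)$ every off-diagonal term $p\neq q$ carries the oscillatory weight $e_p(\gamma)\overline{e_q(\gamma)}=\me^{-\mi k_-\mv(\gamma)\cdot(s_p-s_q)}$ with $|s_p-s_q|\geq L$. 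Applying Lemma \ref{decay} (with the additional polynomial factors $\mv_i(\gamma)\mv_j(\gamma)$, which leave the $|y|^{-1/2}$ decay unchanged) shows each such term is $\oo((k_-L)^{-1/2})$, so that $T^0=\sum_{l=1}^M T^0_l+\oo((k_-L)^{-1/2})$, where $T^0_l$ is assembled from $A^0_l$ alone.

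Next I would identify $\mathcal{A}_l$ in \eqref{aaaa} with $\mathrm{span}\{g_l,h_{l,1},h_{l,2}\}$ and prove approximate invariance. Expanding the bilinear form gives
\[Q(\beta)e_l(\beta)\big(\mv(\gamma)\cdot\mm_l\mv(\beta)+|D_l|\big)=|D_l|\,g_l(\beta)+\sum_{j=1}^2(\mm_l\mv(\gamma))_j\,h_{l,j}(\beta),\]
so the $\beta$-dependence of $T^0_l f$ is always a combination of $g_l$ and $h_{l,j}$; hence $\mathrm{range}(T^0_l)\subseteq\mathcal{A}_l$. Conversely, for $f\in\mathcal{A}_{l'}$ with $l'\neq l$ the surviving factor in the $\alpha$-integral produces a phase $\me^{\mi k_-\mv(\alpha)\cdot(s_{l'}-s_l)}$, again negligible by Lemma \ref{decay}; therefore $T^0(\mathcal{A}_l)\subseteq\mathcal{A}_l+\oo((k_-L)^{-1/2})$, which is the claimed approximate invariance.

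The heart of the proof is to show that $T^0_l$ has rank exactly three on $\mathcal{A}_l$, so that it supplies three nonzero eigenvalues. Writing $T^0_l=(F^0_l)^*F^0_l$ with
\[(F^0_l f)(\gamma)=Q(\gamma)e_l(\gamma)\Big[|D_l|\langle f,g_l\rangle+(\mm_l\mv(\gamma))_1\langle f,h_{l,1}\rangle+(\mm_l\mv(\gamma))_2\langle f,h_{l,2}\rangle\Big],\]
where $\langle\cdot,\cdot\rangle$ is the $L^2(\ps)$ inner product, a kernel element $f$ must satisfy $|D_l|\langle f,g_l\rangle+(\mm_l\mv(\gamma))_1\langle f,h_{l,1}\rangle+(\mm_l\mv(\gamma))_2\langle f,h_{l,2}\rangle=0$ for a.e.\ $\gamma\in-\ps$. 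The main obstacle is deducing that all three inner products vanish, which is exactly where the limited aperture enters: since $\mm_l$ is negative definite and hence invertible (Remark \ref{prop}) and $1,\mv_1(\gamma),\mv_2(\gamma)$ are linearly independent on any nontrivial arc, the functions $1,(\mm_l\mv(\gamma))_1,(\mm_l\mv(\gamma))_2$ are linearly independent on $-\ps$. Thus $\ker F^0_l=\mathcal{A}_l^\perp$, and as $g_l,h_{l,1},h_{l,2}$ are linearly independent by Lemma \ref{indep}, the operator $T^0_l$ is positive definite of rank three on $\mathcal{A}_l$. Diagonalizing the associated $3\times3$ Hermitian matrix then yields three positive eigenvalues and three eigenvectors $f_{l,q}$, $q=1,2,3$, each a linear combination of $g_l,h_{l,1},h_{l,2}$; invoking the sound-hard analogue of \eqref{saei} together with Kato's perturbation theory \cite{katoPerturbationTheoryLinear1995} transfers these eigendata to the true operator $T^\rho$ with error $\oo((k_-L)^{-1/2})$.
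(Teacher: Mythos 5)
Your proposal is correct, and its skeleton matches the paper's: both arguments localize the operator in \eqref{stt} by killing the cross terms $p\neq q$ with the stationary-phase decay of Lemma \ref{decay} (and both need — you explicitly, the paper tacitly — the routine extension of that lemma to amplitudes containing the quadratic factors $\mv_i(\gamma)\mv_j(\gamma)Q^2(\gamma)$, since only $B_0$ and $B_1$ are covered as stated), both identify $\mathcal{A}_l=\mathrm{span}\{g_l,h_{l,1},h_{l,2}\}$ as approximately invariant, and both reduce to a $3\times3$ eigenvalue problem, with Lemma \ref{indep} supplying independence of the basis. Where you genuinely depart is the diagonalization step. The paper (Appendix \ref{app}) computes the matrix representation $\mathbb{A}_l$ of \eqref{alll} explicitly and simply takes eigenpairs $\mathbb{A}_l v_{l,q}=\zeta_{l,q}v_{l,q}$ to define $f_{l,q}$ in \eqref{glp}; as written this leaves open whether $\mathbb{A}_l$ is diagonalizable and whether its eigenvalues are nonzero, i.e.\ whether three \emph{linearly independent} approximate eigenfunctions with significant eigenvalues actually exist. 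You instead factor the single-particle operator as $T^0_l=(F^0_l)^*F^0_l$ and prove $\ker F^0_l=\mathcal{A}_l^\perp$ from the invertibility of $\mm_l$ (Remark \ref{prop}) together with the linear independence of $1,\mv_1(\gamma),\mv_2(\gamma)$ on the aperture arc $-\ps$ (valid since $a+b\mu\gamma_1+c\sqrt{1-\mu^2\gamma_1^2}=0$ on a nontrivial interval forces $a=b=c=0$), concluding that $T^0_l|_{\mathcal{A}_l}$ is self-adjoint and positive definite, hence has exactly three positive eigenvalues with independent eigenvectors. This structural argument proves precisely what the paper's explicit computation only asserts, and it makes honest use of the limited aperture; the paper's route, in exchange, yields the concrete entries of $\mathbb{A}_l$, which are what one actually diagonalizes numerically. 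Two small nits: in the non-orthonormal basis $(g_l,h_{l,1},h_{l,2})$ the matrix of $T^0_l|_{\mathcal{A}_l}$ is not itself Hermitian (it is conjugate to one through the Gram matrix, so you should speak of the generalized eigenproblem or orthonormalize first); and the final transfer to $T^\rho$ via the sound-hard analogue of \eqref{saei} carries the additional $\oo(\rho\ln\rho)$-type expansion error on top of $\oo((k_-L)^{-1/2})$ — harmless here, since the theorem itself concerns $T^0$.
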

	\begin{proof}
		By utilizing  Lemma \ref{decay} to handle the interaction terms in \eqref{stt}, we obtain
		\begin{align*}
			{T}^0{f}(\beta)
			&=\sum_{q=1}^{M}\bigg\{(|D_q|)^2\int_{-\ps}
			Q^2(\gamma)ds_\gamma\int_{\ps} Q({\alpha})\overline{e_q(\alpha)}{f}(\alpha)ds_\alpha\bigg\}  Q({\beta})e_q(\beta)\\
			&+\sum_{q=1}^{M}\bigg\{\int_{\ps} Q({\alpha})\overline{e_q(\alpha)}{f}(\alpha)ds_\alpha\int_{-\ps}|D_q|
			Q^2(\gamma)\mathrm{v}(\gamma)ds_\gamma\cdot \mm_q\bigg\}\mathrm{v}(\beta) Q({\beta}) e_q(\beta)\\
			&+\sum_{q=1}^{M}\bigg\{\int_{-\ps}|D_q|
			Q^2(\gamma)\mathrm{v}(\gamma)ds_\gamma\cdot \mm_q\int_{\ps}\mathrm{v}(\alpha) Q({\alpha})\overline{e_q(\alpha)}{f}(\alpha)ds_\alpha\bigg\} Q({\beta}) e_q(\beta)\\
			&+\sum_{q=1}^{M}\bigg\{\int_{\ps}\mathrm{v}^\top(\alpha) Q({\alpha})\overline{e_q(\alpha)}{f}(\alpha)ds_\alpha (\mm_q)^\top\int_{-\ps} Q^2(\gamma)\mathrm{v}(\gamma)\mathrm{v}^\top(\gamma)ds_\gamma \mm_q\bigg\}\mathrm{v}(\beta) Q({\beta}) e_q(\beta)\\
			&+\mathcal{O}((k_-L)^{-1/2}).
		\end{align*}
		Consequently, we observe that
		\begin{align*}
				\mathcal{R}({T}^0)\subset\mathop{\mathrm{\bigoplus}}\limits_{1\leq l\leq M}\mathcal{A}_l,
		\end{align*}
		where
		\begin{align}\label{aaaa}
			\begin{aligned}
				\mathcal{A}_l=&\textrm{span}\bigg\{ Q({\beta})e_l(\beta),~ \boldsymbol{\mathrm{e}}_1\cdot\mathrm{v}(\beta) Q({\beta})e_l(\beta),~ \boldsymbol{\mathrm{e}}_2\cdot\mathrm{v}(\beta) Q({\beta})e_l(\beta)\bigg\},
			\end{aligned}
		\end{align}
		which indicates the range dimension of ${T}^0$ is at most $3M$ using Lemma \ref{indep}. Hence, there exist at most $3M$ nonzero eigenvalues for the leading time reversal operator ${T}^0$. 
        
        Now we look for the approximate eigenfunctions in $\mathcal{A}_l$ with $1\leq l\leq M$, respectively. We assert that there exist three approximate eigenfunctions in each $\mathcal{A}_l,1\leq l\leq M$. Since $\mathcal{A}_l$ is an approximate invariant subspace for ${T}^0$, as shown in appendix \ref{app}, we get the matrix representation,
\[{T}^0\left(g_l,h_{l,1},h_{l,2}\right)=\left(g_l,h_{l,1},h_{l,2}\right)\mathbb{A}_l+\mathcal{O}((k_-L)^{-1/2}),\]
		where $\mathbb{A}_l$ is a $3\times3$ matrix defined in \eqref{alll}. For simplicity, we denote the three eigenvalues of $\mathbb{A}_l$ by $\zeta_{l,q}$ and the corresponding eigenfunctions by $v_{l,q}, ~ q=1,2,3$, i.e.,
		\[\mathbb{A}_lv_{l,q}=\zeta_{l,q}v_{l,q},\quad q=1,2,3.\]
		Thus, we get the three approximate eigenfunctions in $\mathcal{A}_l$ as follows
		\begin{align}\label{glp}
			f_{l,q}=v_{l,q}^1g_l+v_{l,q}^2h_{l,1}+v_{l,q}^3h_{l,2},\quad q=1,2,3,
		\end{align}
		where $v_{l,q}^j$ denotes the $j$-th element of $v_{l,q}$.
	\end{proof}
	
	In summary, we have proved that in the regime of small and well-resolved sound-hard buried particles, the
	leading time reversal operator $T^0$ admits $3M$ eigenvalues, and	the approximate eigenfunctions of ${T}^0$ are the far fields corresponding to linear combinations of monopole and dipole sources located at the centers of the unknown particles.
	
	\begin{theorem}\label{gloo}
		For $1\leq l\leq M$, the incident Herglotz wave associated with the approximate eigenfunction ${f}_{l,q},q=1,2,3$ given by \eqref{glp} selectively focuses on the $l$th particle.
	\end{theorem}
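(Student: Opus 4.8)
The plan is to mirror the sound-soft selective-focusing argument given just above and reduce everything to the decay estimates of Lemma \ref{decay}, applied componentwise to both the monopole contribution $B_0$ and the vector-valued dipole contribution $B_1$. First I would write down the Herglotz wave associated with the kernel $f_{l,q}$, namely $u^i_{f_{l,q}}(x)=\int_{\ps} f_{l,q}(\alpha)\me^{\mi k_+\alpha\cdot x}ds_\alpha$, and recall from the Fresnel formula that each plane-wave component $\me^{\mi k_+\alpha\cdot x}$ transmits into $\lowsp$ as $Q(\alpha)\me^{\mi k_-\mv(\alpha)\cdot x}$. By linearity the transmitted field is therefore
\[
u^{trans}_{f_{l,q}}(x)=\int_{\ps} f_{l,q}(\alpha)\,Q(\alpha)\,\me^{\mi k_-\mv(\alpha)\cdot x}\,ds_\alpha,\qquad x\in\lowsp .
\]

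Next I would substitute the explicit form $f_{l,q}=v_{l,q}^1 g_l+v_{l,q}^2 h_{l,1}+v_{l,q}^3 h_{l,2}$ from \eqref{glp} and split the integral into three pieces using the definitions \eqref{gl}. Inserting $g_l(\alpha)=Q(\alpha)\me^{-\mi k_-\mv(\alpha)\cdot s_l}$ into the first piece produces exactly $\int_{\ps}Q^2(\alpha)\me^{\mi k_-\mv(\alpha)\cdot(x-s_l)}ds_\alpha=B_0(x-s_l)$, while inserting $h_{l,j}(\alpha)=(\boldsymbol{\mathrm{e}}_j\cdot\mv(\alpha))Q(\alpha)\me^{-\mi k_-\mv(\alpha)\cdot s_l}$ into the remaining pieces produces $\int_{\ps}(\boldsymbol{\mathrm{e}}_j\cdot\mv(\alpha))Q^2(\alpha)\me^{\mi k_-\mv(\alpha)\cdot(x-s_l)}ds_\alpha=\boldsymbol{\mathrm{e}}_j\cdot B_1(x-s_l)$, with $B_0,B_1$ as defined in \eqref{b0}. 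This yields the closed form
\[
u^{trans}_{f_{l,q}}(x)=v_{l,q}^1\,B_0(x-s_l)+v_{l,q}^2\,\boldsymbol{\mathrm{e}}_1\cdot B_1(x-s_l)+v_{l,q}^3\,\boldsymbol{\mathrm{e}}_2\cdot B_1(x-s_l).
\]

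Finally I would invoke Lemma \ref{decay}: since $|B_0(y)|\le C_0|y|^{-1/2}$ and $|B_1(y)|\le C_1|y|^{-1/2}$ for $|y|$ large, each of the three terms is of order $\oo((k_-|x-s_l|)^{-1/2})$, so the transmitted field decays like $|x-s_l|^{-1/2}$ as one moves away from the $l$th particle and, by the separation hypothesis \eqref{mulobs1}, is only of size $\oo(L^{-1/2})$ at every other particle. This is precisely the statement that $f_{l,q}$ focuses selectively on $D_{\rho,l}$. I expect no genuinely new obstacle here: the computation is a linear-combination version of the sound-soft case, and the single point requiring care is bookkeeping of orders, namely that the argument controls only the leading transmitted field generated by the \emph{approximate} eigenfunction, so I would record that the $\oo((k_-L)^{-1/2})$ error carried by $f_{l,q}$ contributes a lower-order transmitted field that does not disturb the focusing conclusion, exactly as in the sound-soft Theorem proved above.
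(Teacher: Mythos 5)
Your proposal is correct and matches the paper's own proof essentially verbatim: both transmit the Herglotz wave componentwise via the Fresnel coefficient $Q(\alpha)$, expand $f_{l,q}$ through \eqref{glp} and \eqref{gl} to express $u^{trans}_{f_{l,q}}$ as $v_{l,q}^1 B_0(x-s_l)+\sum_{j=1,2}v_{l,q}^{j+1}\boldsymbol{\mathrm{e}}_j\cdot B_1(x-s_l)$, and invoke Lemma \ref{decay} to get the $\mathcal{O}((k_-|x-s_l|)^{-1/2})$ decay. Your closing remark on the $\mathcal{O}((k_-L)^{-1/2})$ error carried by the approximate eigenfunction is a sensible extra bookkeeping point that the paper leaves implicit.
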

    
	The proof follows from Lemma \ref{decay} since the transmitted wave corresponding to incident waves $u_{f_{l,q}}^i(x)=\int_{\ps}{f}_{l,q}(\beta)\mathrm{e}^{\mi k_+\beta\cdot x}ds_\beta$ is
	\begin{align*}
		u_{f_{l,q}}^{trans}(x)&=\int_{\ps}{f}_{l,q}(\beta)Q(\beta)\mathrm{e}^{\mi k_-\mv(\beta)\cdot x}ds_\beta\\&=v_{l,q}^1\int_{\ps}Q^2(\beta)\mathrm{e}^{\mi k_-\mv(\beta)\cdot (x-s_l)}ds_\beta+\sum_{j=1,2}v_{l,q}^{j+1}\boldsymbol{\mathrm{e}}_j\cdot\int_{\ps}\mv(\beta)Q^2(\beta)\mathrm{e}^{\mi k_-\mv(\beta)\cdot (x-s_l)}ds_\beta,
	\end{align*}
	which decays rapidly away from the $l$-th particle as $\mathcal{O}((k_-|x-s_l|)^{-\frac{1}{2}})$.
	\begin{remark}(Three dimensional case)
		Clearly, similar selective focusing results hold for the three dimensional problem. More precisely, one can show that each small sound-soft and sound-hard buried particle in a layered medium gives rise to 1 and 4 eigenvalues, respectively.
	\end{remark}
	\begin{remark} (Impedance and Penetrable cases).
		Although we only consider impenetrable scatterers of Dirichlet and Neumann types here, we note that because of the similar structure of the asymptotic expansions \eqref{farps} for impenetrable scatterers of impedance type \cite{liRecoveringMultiscaleBuried2015} and \eqref{asyfar5} for penetrable scatterers \cite{ammariMUSICAlgorithmLocating2005}, the results obtained for obstacles carrying a Dirichlet or Neumann boundary condition can also be applied to scatterers of impedance type and penetrable scatterers.
	\end{remark}
    
    As we can see from \eqref{b0}, when the wavenumber is relatively small, the decaying rate of the oscillation integral is slow, resulting in an unsatisfactory imaging result. Due to this limitation, in the following, using our time reversal method as an initial indicator, we consider recovering multiple extended buried obstacles for higher resolution based on Bayesian inversion technique in the low-frequency region.
	\section{Bayesian inversion for the shape of extended buried obstacles}
In this section,  we assume that the boundary $\partial D_i$ of the obstacle $D_i$ can be represented as
		\begin{align*}
			\partial D_i:=(c_{i1},c_{i2})+r_i(\theta)(\cos\theta,\sin\theta),
			\quad \theta\in(0,2\pi),1\leq i\leq M,
		\end{align*}
		where $(c_{i1},c_{i2})$ denotes the position of $D_i$, and $r_i(\theta)$ is given in the following Fourier expansion
		\begin{align}\label{fouri}
			r_i(\theta)=a_{i0}+\sum_{j=1}^N\left(a_{ij}\cos j\theta+b_{ij}\sin j\theta\right),\quad j=1,2,\cdots,N.
		\end{align} 
        Thus, the unknown parameters can be denoted as a finite set $q$:
	\begin{align*}
		q = (q_1,q_2,\cdots,q_M)\in \mathbb{R}^{M(3+2N)},
	\end{align*}
	where $q_i=(c_{i1},c_{i2},a_{i0},a_{ij},b_{ij}),i=1,\cdots,M,j = 1,\cdots,N$.
	% The space $\mathcal{X}$ is equipped with the norm $\|A\|_\mathcal{X}=\sqrt{(A,A)}$ for $A\in \mathcal{X}$ and
	% $(A,A)$ means the standard inner-product in $\mathcal{X}$. 
    Suppose that $q$ obeys the prior Gaussian distribution with mean $\mathbf{m}_{pr}$ and variance $\Sigma_{pr}$. Meanwhile, each variable in $q$ is independent. Let $\pi_{pr}$ and $\mu_{pr}$ denote the prior density and measure of $q$, respectively. Since the obstacles can be determined by the finite set $q$,
    % Due to the relationship between the obstacle $D$ and the far field measurement $u^\infty(\hat{x},d)$ with fixed $d$, 
    the direct scattering operator can be defined as
	% \begin{align}\label{reladf}
	% 	u^\infty(\hat{x},d) = \mathcal{G}(D).
	% \end{align}
	% , \eqref{reladf} can be rewritten as
	\begin{align*}
		u^\infty(\hat{x},d) = \mathcal{G}(q),
	\end{align*}
	where the nonlinear observation operator $\mathcal{G}$ is an abstract map from the space of parameters to the space of far field measurements. Taking the noise generated in the measurement procedure into account, the statistical model in this section can be presented by
	\begin{align*}
		y = \mathcal{G}(q)+\eta.
	\end{align*}
	The observation noise $\eta$ is assumed to satisfy $\eta\sim N(0,\sigma^2 \mathbf{I}_{K\times K})$ and $y\in \mathbb{C}^K$. $\mathbf{I}_{K\times K}$ is the identity matrix where $K$ is the number of observation directions. 
    
    Let $\pi_{post}$ and
	$\mu_{post}$ denote the posterior density and measure of $q$, respectively. 
	The purpose of the Bayesian method is to solve the posterior measure
	$\mu_{post}$ of $q$. According to the Bayesian formula \cite{bui-thanhAnalysisInfiniteDimensional2014}, the posterior density is given as follows
	\begin{align*}
		\pi_{post}(q)=\frac{\rho(y-\mg(q))\pi_{pr}(q)}{\int_{\mathbb{R}^P} \rho(y-\mg(q'))\pi_{pr}(q')dq'}
	\end{align*}
	where $\rho(y-\mg(q))=\rho(y|q)$, $y|q\sim N(\mg(q),\sigma^2\mathbf{I}_{K\times K})$ is the likelihood function, which gives the conditional density of $y$ by given $q$. Then 
	\begin{align*}
		\begin{aligned}
			\pi_{post}(q)&\propto \rho(y-\mg(q))\pi_{pr}(q)\\
			& \propto \mathrm{exp}\left(-\frac{1}{2\sigma^2}\|y-\mg(q)\|^2\right)\pi_{pr}(q).
		\end{aligned}
	\end{align*}
	where the norm stems from the standard inner product in $\mathbb{C}^K$. 
    
    Finally, we adopt the MCMC algorithm to explore the posterior distribution of the unknown set $q$, which is given in Algorithm \ref{mcmc}.
	\begin{algorithm}[htbp]
		\caption{MCMC algorithm of recover multiple buried obstacles.}\label{mcmc}
		\begin{algorithmic}
			\STATE\begin{itemize}
				\item[Step 1\quad] Set a scalar $\beta\in(0,1)$ and the number of iterations $T_{max}$.
				\item[Step 2\quad] Initialize $q^{(0)}=(q^{(0)}(1),q^{(0)}(2),\cdots,q^{(0)}(M))$, the number of obstacles $M$ and the initial guess is obtained by the time reversal method.
				\item[Step 3\quad] Update $\widetilde{q}(i)$ from $q^{(k)}(i),i=1,2,\cdots,M,k=1,2,\cdots,T_{max}$
				\begin{align*}
					\widetilde{q}(i) &= \mathbf{m}_{pr}(i)+\sqrt{1-\beta^2}(q^{(k)}(i)-\mathbf{m}_{pr}(i))+\beta\omega,~~\omega\sim N(0,1),\\
					\widetilde{q} &= (\widetilde{q}^{(0)}(1),\widetilde{q}^{(0)}(2),\cdots,\widetilde{q}^{(0)}(M)),\\
					q^{(k)} &= (q^{(k)}(1),q^{(k)}(2),\cdots,q^{(k)}(M)),
				\end{align*}
				where $\mathbf{m}_{pr}(i)$ is the mean value of $q(i)$.
				\item[Step 4\quad] Calculate acceptance ratio $\alpha$
				\begin{align*}
					\alpha(\widetilde{q},q^{(k)})=\operatorname{min}\left\{1,\operatorname{exp}\left(\frac{1}{2\sigma^2}\|y-\mg(q^{(k)})\|_\my^2-\frac{1}{2\sigma^2}\|y-\mg(\widetilde{q})\|_\my^2\right)\right\}.
				\end{align*}
				\item[Step 5\quad] Draw $\widetilde{\alpha}\sim U(0,1)$. If $\widetilde{\alpha}<\alpha(\widetilde{q},q^{(k)})$, set $q^{(k+1)}=\widetilde{q}$; else, reject $\widetilde{q}$ and keep $q^{(k+1)}=q^{(k)}$.
				\item[Step 6\quad]  Repeat Step 3-5 until $T_{max}$ times and compute the mean of last 100 samples.
			\end{itemize}
		\end{algorithmic}
	\end{algorithm}
	\begin{remark}
		To speed up the convergence of the sampling process, we can utilize the Hessian information in the inversion part. Readers are referred to \cite{flathFastAlgorithmsBayesian2011} for some Hessian-based sampling methods. 
	\end{remark}
    \begin{remark}
            With the aid of the Fr\'echet differentiability of $\mathcal{G}$ \cite{liImagingBuriedObstacles2021}, the well-posedness of the Bayesian framework in layered media follows a similar spirit in the homogeneous case \cite{bui-thanhAnalysisInfiniteDimensional2014}.
        \end{remark}
	
	\section{Numerical experiments}\label{exper}
	In this section, we test our global and selective focusing results as well as the efficiency of the time reversal method cooperated with the Bayesian technique in several numerical examples. Here, the far field data is obtained by the forward solver \cite{laiFastSolverMultiparticle2014} based on integral equations. For the inversion part, to avoid inverse crime and show the robustness of the algorithm, we add 5\% Gaussian noise to the simulated far field data. Once the numerical time reversal operator $T$ is found, the eigensystems are obtained through
	the ‘$eig$’ command in MATLAB, and Herglotz wave $u^i_f$ and its transmission wave $u_f^{trans}$ are evaluated in a straightforward manner.
	Throughout all the examples, 
	the emission and reception directions are obtained by discretizing  $\ps=[\pi+\theta_c,2\pi-\theta_c]$ and $-\ps=[\theta_c,\pi-\theta_c]$ uniformly using 180 points, respectively. 
	
	\subsection{Selective imaging results of multiple particles}\label{selec}
    In this subsection, we present several numerical examples to verify our selective focusing results in Section \ref{secle} and illustrate their applicability to imaging multiple particles under different boundary conditions.  We choose wavenumbers to be $k_+=40$ and $k_-=30$ here.
    
    First, to investigate the number of significant eigenvalues in the asymptotic region, we consider a disk located at $(-5,-10)$ with radius $10^{-4}$. Figure \ref{ex2}(a) and (b) show the first thirty eigenvalues of the time reversal operator in sound-soft and sound-hard cases, respectively. It can be easily seen that a sound-soft particle gives only one significant eigenvalue while a sound-hard particle gives three. Note that the difference of their magnitude between sound-soft and sound-hard cases can be derived from the factor $(\ln \rho)^{-1}$ in \eqref{farps} and $\rho^2$ in \eqref{asyfar5}.  
	
	Then we consider two small obstacles which are a kite-shaped obstacle with $a = 0.0325,b=0.05,c=0.075$ and a starfish-shaped obstacle with $a=0.1,b=0.03$ given in Table \ref{tab}. Under sound-soft boundary condition, Figure \ref{ex2}(c) and (d) show the absolute values of the Herglotz wave with the eigenfunction corresponding to the first and second eigenvalue as its kernel, respectively. Here the sampling region is chosen to be $[-10, 10]\times[-20, 0]$.
	We observe that each particle gives one significant eigenvalue, and the Herglotz wave function associated with the corresponding eigenvalue achieves selective focusing at the location of the unknown particle. As shown in Figure \ref{ex4}(a-f), similar selective focusing results also hold for sound-hard boundary condition, which is consistent with the analysis in Section \ref{secle}.
	
	Furthermore, in Figure \ref{ex10}(a), we consider 9 disks with radius $r=0.1$ uniformly distributed on a circle with radius $R=7$ located at $(0,-10)$. Figure \ref{ex10}(b) and (c) demonstrate the superposition of selective focusing results of sound-soft particles using the first 9 eigenfunctions and sound-hard particles using the 
 first 27 eigenfunctions, respectively.
	
	\begin{figure}[htp!]
		\centering
		\subfigure[]{\includegraphics[scale=0.3]{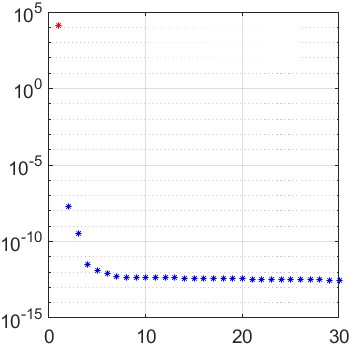}}
		\subfigure[]{\includegraphics[scale=0.3]{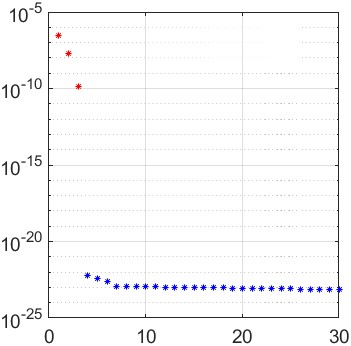}}
		\subfigure[]{\includegraphics[scale=0.3]{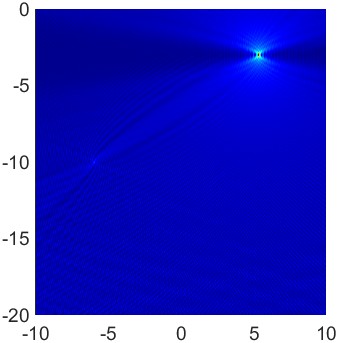}}
		\subfigure[]{\includegraphics[scale=0.3]{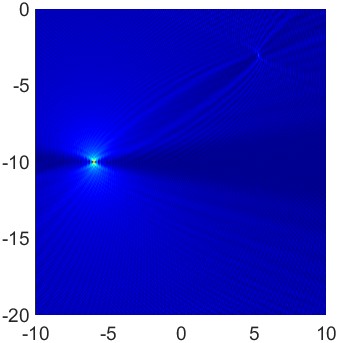}}
		\caption{Eigenvalues of the time reversal operator. (a) Sound-soft boundary condition. (b) Sound-hard boundary condition. Imaging of two sound-soft particles. (c)(d) The Herglotz wave with the eigenfunction of the first and second eigenvalues, respectively.}\label{ex2}
	\end{figure}

	\begin{figure}[htp!]
		\centering
		\subfigure[]{\includegraphics[scale=0.20]{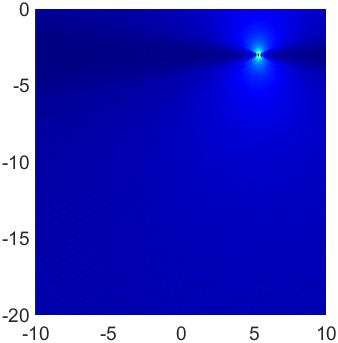}}
		\subfigure[]{\includegraphics[scale=0.20]{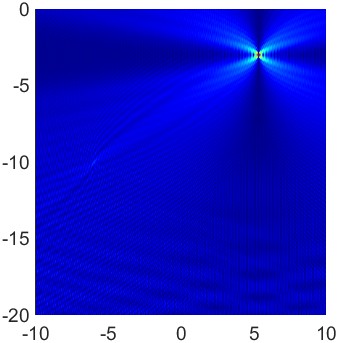}}
		\subfigure[]{\includegraphics[scale=0.20]{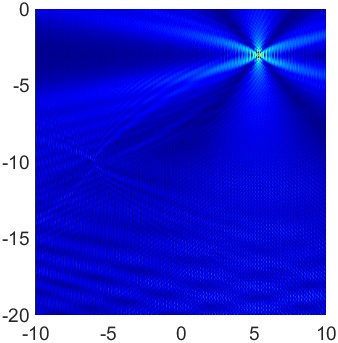}}
		\subfigure[]{\includegraphics[scale=0.20]{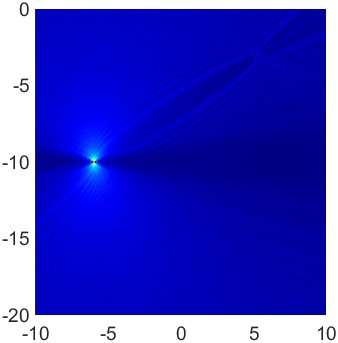}}
		\subfigure[]{\includegraphics[scale=0.20]{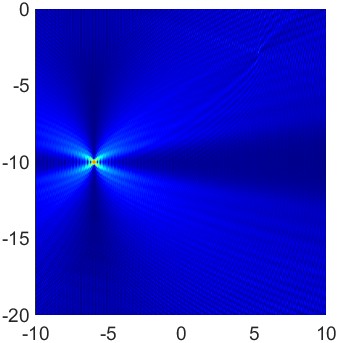}}
		\subfigure[]{\includegraphics[scale=0.20]{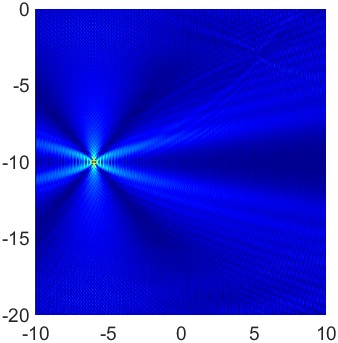}}
		\caption{Imaging of two sound-hard particles. The Herglotz wave with the eigenfunction of the first six eigenvalues.}\label{ex4}
	\end{figure}
	
	\begin{figure}[htp!]
		\centering
		\subfigure[]{\includegraphics[scale=0.35]{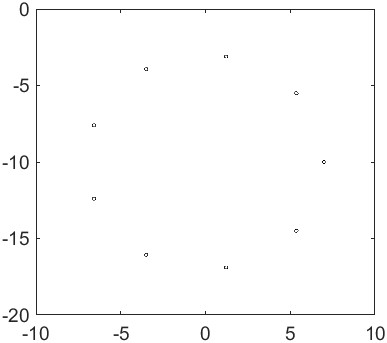}}
		\subfigure[]{\includegraphics[scale=0.35]{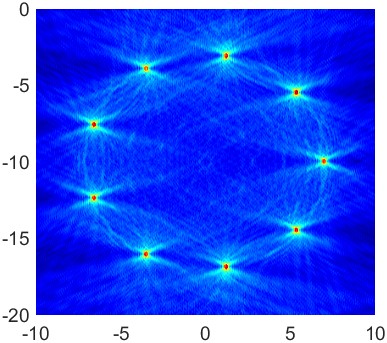}}
		\subfigure[]{\includegraphics[scale=0.35]{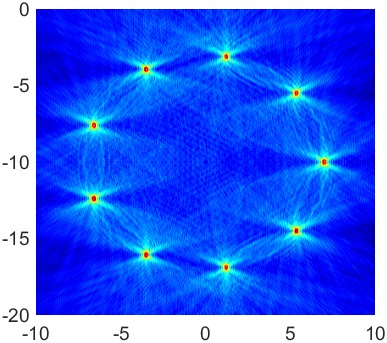}}
		\caption{Imaging of 9 particles. (a) Exact locations. (b) Imaging of sound-soft particles. (c) Imaging of sound-hard particles.}\label{ex10}
	\end{figure}
	\subsection{Imaging of extended obstacles based on time reversal operator}\label{gle}
	We consider two extended obstacles, which are a kite-shaped obstacle with $a = 1.3,b=2,c=3$ and a starfish-shaped obstacle with $a=2,b=0.6$ given in Table \ref{tab}. The geometry is shown in Figure \ref{ex1}(a). In this extended case, there are no explicitly significant eigenvalues anymore. Figures \ref{ex1}(b) and (c) demonstrate the imaging results of the Herglotz wave under sound-soft and sound-hard cases, respectively, whose kernels are both combinations of eigenfunctions corresponding to the first eighty eigenvalues. It can be seen that the location and approximate shape of each obstacle are well recovered.
	\begin{table}[htp!]
		\caption{Parametrization of the obstacles}
		\begin{center}
			\begin{tabular}{ll}
				\hline
				Type & Parametrization\\
				\hline
				\text{Kite:}\quad & $x(t) = (6,-3)+(a\cos 2t+b\cos t,c\sin t), \qquad t\in[0,2\pi]$\\
				\text{Starfish:}\quad& $x(t)=(-6,-10)+(a+b\cos(5(t-\pi/2)))(\cos t,\sin t),
				\qquad t\in [0,2\pi]$\\
				% \hline
                \text{Pear:}\quad& $x(t)=(6,-3)+(2+0.5\cos 3t)(\cos t,\sin t),
				\qquad t\in [0,2\pi]$\\
				\text{Apple:}\quad& $x(t)=(-6,-10)+\frac{2+1.8\cos t+0.2\sin 2t}{1+0.75\cos t}(\cos t,\sin t),
				\qquad t\in [0,2\pi]$\\
				\hline
			\end{tabular}
		\end{center}
		\label{tab}
	\end{table} 
	\begin{figure}[htp!]
		\centering
		\subfigure[]{\includegraphics[scale=0.35]{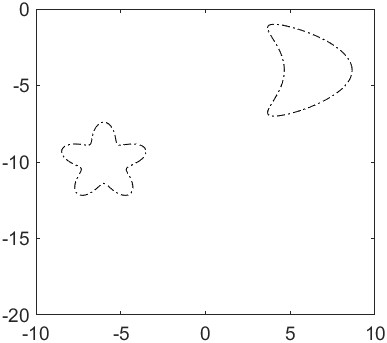}}
		\subfigure[]{\includegraphics[scale=0.35]{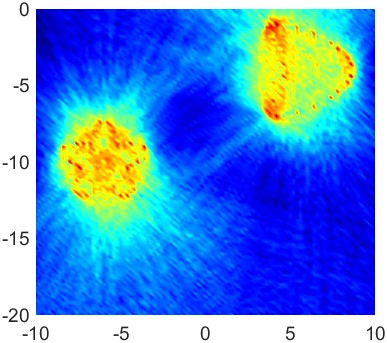}}
		\subfigure[]{\includegraphics[scale=0.35]{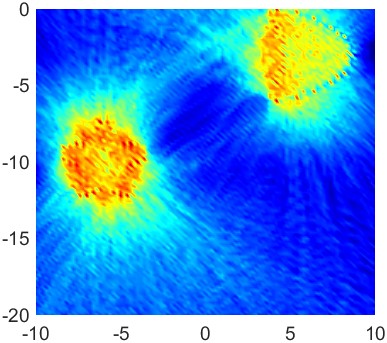}}
		\caption{Imaging of two extended obstacles. (a) The black dotted line represents the exact shape. (b) The Herglotz wave with eigenfunctions corresponding to the first {eighty} eigenvalues of sound-soft obstacles. (c) The Herglotz wave with eigenfunctions corresponding to the first eighty eigenvalues of sound-hard obstacles.}\label{ex1}
	\end{figure}
	\subsection{Reconstruction of extended obstacles improved by Bayesian technique} 
    % As we mentioned above, when the wavenumber is relatively small, the decaying speed of oscillation integral is a bit slow, resulting in the unsatisfactory global imaging result. Due to this limitation,
    In this subsection, using our time reversal method as an initial indicator, we consider recovering multiple extended buried particles more accurately based on the Bayesian inversion technique in the low-frequency region with $k_+=4$ and $k_-=3$. First, we consider a pear-shaped and an apple-shaped obstacle whose parameterizations are given in Table \ref{tab}. Figure \ref{ex5}(a) shows the Herglotz wave with a combination of the first several eigenfunctions. Figure \ref{ex5}(b) shows the exact scatterers and the initial guess from the time reversal method. One can see that the result has a very low resolution; therefore, we apply the Bayesian inversion scheme to improve the imaging. We choose the number of Fourier expansion terms to be $N = 3$  in \eqref{fouri} and the scale parameter $\beta = 0.01$ in Algorithm \ref{mcmc}. Adopting 2000 iterations in the MCMC algorithm, Figure \ref{ex5}(c) plots the Markov chains of $a_{i0},i=1,2$ in \eqref{fouri} during the sampling process. The Bayesian inversion result is demonstrated in Figure \ref{ex5}(d), from which we can see the result is greatly improved compared with the time reversal result.
    Figure \ref{ex5}(e-h) show similar results hold for sound-hard obstacles. 

	% \begin{table}[htp!]
	% 	\caption{Parametrization of the obstacles}
	% 	%		\renewcommand\arraystretch{1.5}
	% 	\begin{center}
	% 		\begin{tabular}{ll}
	% 			%		\vspace{0.5em}
	% 			\hline
	% 			Type & Parametrization\\
	% 			\hline
	% 			\text{Pear:}\quad& $x(t)=(6,-3)+(2+0.5\cos 3t)(\cos t,\sin t),
	% 			\qquad t\in [0,2\pi]$\\
	% 			\text{Apple:}\quad& $x(t)=(-6,-10)+\frac{2+1.8\cos t+0.2\sin 2t}{1+0.75\cos t}(\cos t,\sin t),
	% 			\qquad t\in [0,2\pi]$\\
	% 			\hline
	% 		\end{tabular}
	% 	\end{center}
	% 	\label{tab2}
	% \end{table} 
	\begin{figure}[htp!]
		\centering
		\subfigure[]{\includegraphics[scale=0.25]{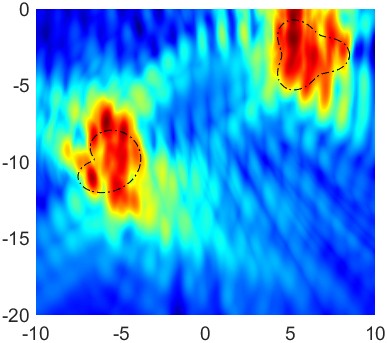}}
		\subfigure[]{\includegraphics[scale=0.25]{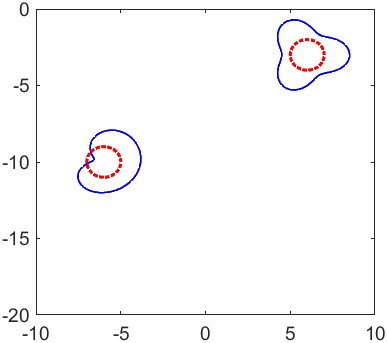}}
		\subfigure[]{\includegraphics[scale=0.25]{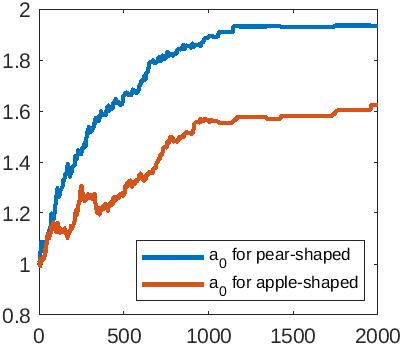}}
		\subfigure[]{\includegraphics[scale=0.25]{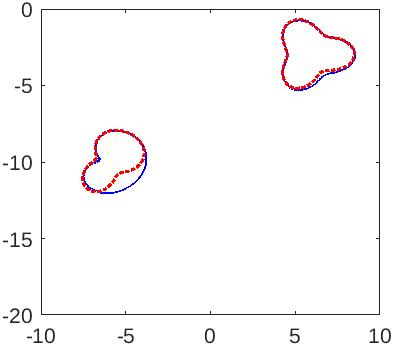}}
		\subfigure[]{\includegraphics[scale=0.25]{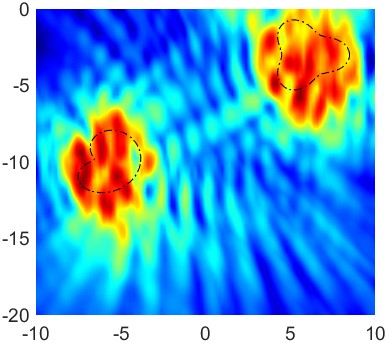}}
		\subfigure[]{\includegraphics[scale=0.25]{figure/extn2.jpg}}
		\subfigure[]{\includegraphics[scale=0.25]{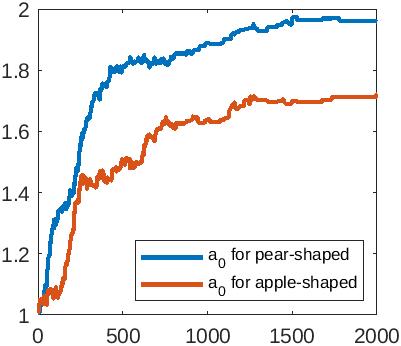}}
		\subfigure[]{\includegraphics[scale=0.25]{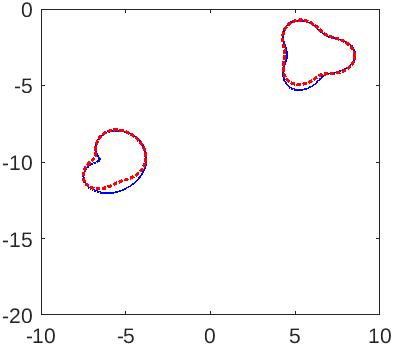}}
		\caption{Imaging and Bayesian inversion of two extended obstacles. (a) Global imaging result. (b) Exact shape and initial guess. (c) Markov chains for $a_{i0}$. (d) The result of Bayesian inversion. 
 (e-h) Similar results for sound-hard obstacles}\label{ex5}
	\end{figure}

	\section{Conclusion}
	In this work, we have developed the global and selective focusing method of multiple particles in a layered medium based on the far field time reversal model. By a combination of the integral equation method and the decay property of the oscillation integral, we show the effective imaging based on the eigensystem of the time reversal operator in a general case. In the region of small and well-resolved particles, we show that each sound-soft and sound-hard particle gives rise to one and three significant eigenvalues, respectively, whose corresponding eigenfunctions can be used to achieve selective focusing. Furthermore, a layered Bayesian scheme is proposed for better shape reconstruction with the time reversal result as an initial guess. Numerical experiments are demonstrated to show the applicability of these inversion methods. Our future work includes extending the time reversal method to general inhomogeneous media and multiple buried elastic obstacles.
	
	\appendix
    
    \section{Independence of $g_l$ and $h_{l,q}$}
    \begin{lemma}\label{indep}
		Under the notation of \eqref{gl}, the functions $g_l$ and $h_{l,q}$ for $l=1,\cdots,M$ and $q=1,2$ are linearly independent.
	\end{lemma}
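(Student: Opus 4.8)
The plan is to exploit the interpretation of $g_l$ and $h_{l,q}$ as far field patterns of point and dipole sources and to trace a vanishing linear combination back to the singularities of the layered Green's function. Suppose
\[
\sum_{l=1}^M\left(c_l\,g_l(\alpha)+d_{l,1}\,h_{l,1}(\alpha)+d_{l,2}\,h_{l,2}(\alpha)\right)=0,\qquad \alpha\in\ps,
\]
for scalars $c_l,d_{l,1},d_{l,2}$. As recorded just before the statement, $g_l$ is the far field of the monopole $\Phi(\cdot,s_l)$ and $h_{l,q}$ that of the dipole $(\mi k_-)^{-1}\boldsymbol{\mathrm{e}}_q\cdot\nabla_z\Phi(\cdot,z)|_{z=s_l}$. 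By linearity of the far field map the displayed combination is therefore exactly the far field pattern of the single radiating field
\[
w(x)=\sum_{l=1}^M\left(c_l\,\Phi(x,s_l)+\frac{1}{\mi k_-}\sum_{q=1,2}d_{l,q}\,\boldsymbol{\mathrm{e}}_q\cdot\nabla_z\Phi(x,z)\big|_{z=s_l}\right),\qquad x\in\mr\setminus\{s_1,\dots,s_M\}.
\]
First I would record that $w$ solves the Helmholtz equation with wavenumber $k_+$ in $\upsp$, satisfies the transmission conditions $[w]=[\partial_\nu w]=0$ on $\face$, and meets the radiation condition, since each $\Phi(\cdot,s_l)$ and its $z$-derivatives do.

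Next I would run a uniqueness argument. By Proposition \ref{farree} the far field pattern is the restriction of a real-analytic function, so its vanishing on the open aperture $\ps$ forces it to vanish on the whole upper half-circle $\ms^+$; the reciprocity relation \eqref{recp} lets me pass freely between the emission aperture $\ps$ and the observation aperture $-\ps$. An application of Rellich's lemma in the upper half-space then yields $w\equiv0$ in $\upsp$. The transmission conditions transfer zero Cauchy data to the lower side of $\face$, and since $\lowsp\setminus\{s_1,\dots,s_M\}$ is connected, unique continuation for the Helmholtz operator with wavenumber $k_-$ gives $w\equiv0$ in $\lowsp\setminus\{s_1,\dots,s_M\}$.

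Finally I would localise at each source. Fix $l$. Near $s_l$ the reflected part $\Phi^{ref}(\cdot,s_l)$ and all contributions of the remaining sources $s_{l'}$, $l'\neq l$, are real-analytic, because the $s_l$ are distinct and lie strictly below $\face$; hence the only singular part of $w$ near $s_l$ comes from the $l$-th monopole and dipole terms built on $\Phi^i(x,s_l)=\tfrac{\mi}{4}H_0^{(1)}(k_-|x-s_l|)$. Since $w\equiv0$ in a punctured neighbourhood of $s_l$, this singular part must vanish identically. The monopole contributes a logarithmic singularity while the two dipole terms contribute singularities of the type $(x-s_l)_q/|x-s_l|^2$ with distinct angular profiles; these three are linearly independent modulo functions bounded near $s_l$, so $c_l=d_{l,1}=d_{l,2}=0$. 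As $l$ was arbitrary, every coefficient vanishes and the $3M$ functions are linearly independent. The main obstacle is the uniqueness chain of the second step, namely justifying the analytic continuation of the limited-aperture far field to the full half-circle together with Rellich's lemma and unique continuation across the interface in the layered geometry, rather than the elementary singularity bookkeeping of the last step.
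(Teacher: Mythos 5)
Your proposal is correct and follows essentially the same route as the paper: identify the vanishing combination as the far field of a superposition of monopole and dipole sources at the $s_l$, use a Rellich-type uniqueness argument in the layered medium to conclude the field vanishes in $\lowsp\setminus\{s_1,\dots,s_M\}$, and then kill the coefficients by examining the singularities at each $s_l$. Your final singularity bookkeeping (logarithmic versus $(x-s_l)_q/|x-s_l|^2$ profiles, independent modulo bounded terms) is just an equivalent phrasing of the paper's scaling step of multiplying by $\rho$ and letting $\rho\to 0$, and your explicit mention of analytic continuation of the limited-aperture far field is a detail the paper leaves implicit in its appeal to Rellich's lemma.
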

	\begin{proof}
		Suppose that there exist complex coefficients $(a_l)_{l=1,\cdots,M}$ and
		$(b_{l,q})_{l=1,\cdots,M}^{q=1,2}$ such that
		\begin{align*}
			\sum_{l=1}^M a_l g_l(\beta)+\sum_{l=1}^M\sum_{q=1}^{2}\mi 
			k_-b_{l,q}h_{l,q}(\beta)=0,
			\quad \forall\beta\in\ps,
		\end{align*}
		which amounts to saying that the function 
		$$u(x):= \sum_{l=1}^M a_l G(x,s_l)+\sum_{l=1}^M\sum_{q=1}^{2}b_{l,q}\nabla G(x,s_l)\cdot \boldsymbol{\mathrm{e}}_q,\quad x\in\lowsp,$$ corresponding to the incident field  of a combination of monopole and dipole sources has a vanishing far field in the upper half-space $\upsp$. By Rellich’s lemma in a layered medium similar to Lemma 2.12 in  \cite{coltonInverseAcousticElectromagnetic2019}, $u$ must vanish everywhere in $\lowsp$, so that
		\begin{align}\label{sin}
			\sum_{l=1}^M a_l G(x,s_l)+\sum_{l=1}^M\sum_{q=1}^{2}b_{l,q}\nabla G(x,s_l)\cdot \boldsymbol{\mathrm{e}}_q=0,\qquad \forall x\in\lowsp\notin\{s_1,\cdots,s_M\}.
		\end{align}
		Fix $l\in\{1,\cdots M\}$ and choose $x=s_l+\rho\hat{x}$ with $\hat{x}\in\ms$ and
		$0\leq\rho\leq\rho*$ small enough. Multiplying \eqref{sin} by $\rho$ and taking the limit as $\rho\rightarrow0$, we note that the only non-vanishing contribution comes from the most singular term. More precisely, the dipole term corresponding to $l=m$ gives \[\lim\limits_{\rho\rightarrow0}\rho\nabla G(s_m+\rho\hat{x},s_m)\cdot \boldsymbol{\mathrm{e}}_q=-\frac{1}{2\pi}(\hat{x}\cdot \boldsymbol{\mathrm{e}}_q).\] Therefore, $\hat{x}\cdot(\sum_{q=1}^{2}b_{m,q}\boldsymbol{\mathrm{e}}_q)=0$. Since $\hat{x}\in\ms$
		is arbitrary and since the vector columns $\boldsymbol{\mathrm{e}}_1,\boldsymbol{\mathrm{e}}_2$ are linearly independent, the above relation yields $b_{m,1}=b_{m,2}=0$ for all $m\in\{1,\cdots,M\}$. Similarly, we can easily show that $a_1=\cdots=a_M=0$.
	\end{proof}
	\section{Matrix representation of ${T}^0$ in $\mathcal{A}_l$}\label{app}
	By the insertion of \eqref{gl} into the definition of ${T}^0$, we obtain that,
	
	\begin{align*}
		{T}^0g_l=&\bigg\{(|D_l|)^2\int_{-\ps}
		Q^2(\gamma)ds_\gamma\int_{\ps}Q^2({\alpha})ds_\alpha\bigg\} Q({\beta})e_l(\beta)\\
		&+\bigg\{\int_{\ps}Q^2({\alpha})ds_\alpha\int_{-\ps}|D_l|
		Q^2(\gamma)\mathrm{v}(\gamma)ds_\gamma\cdot \mm_l\bigg\}\mathrm{v}(\beta)Q({\beta}) e_l(\beta)\\
		&+\bigg\{\int_{-\ps}|D_l|
		Q^2(\gamma)\mathrm{v}(\gamma)ds_\gamma\cdot \mm_l\int_{\ps}\mathrm{v}(\alpha)Q^2({\alpha})ds_\alpha\bigg\}Q({\beta}) e_l(\beta)\\
		&+\bigg\{\int_{\ps}\mathrm{v}(\alpha)Q^2({\alpha})ds_\alpha(\mm_l)^\top\int_{-\ps}Q^2(\gamma)\mathrm{v}(\gamma)\mathrm{v}^\top(\gamma)ds_\gamma \mm_l\bigg\}\mathrm{v}(\beta)Q({\beta}) e_l(\beta)\\
		&+\mathcal{O}((k_-L)^{-1/2})\\
		=&a^l_{1,1}g_l+a^l_{2,1}h_{l,1}+a^l_{3,1}h_{l,2}+\mathcal{O}((k_-L)^{-1/2}),
	\end{align*}
    and
	\begin{align*}
		{T}^0h_{l,j}=&\bigg\{(|D_l|)^2\int_{-\ps}
		Q^2(\gamma)ds_\gamma\int_{\ps}\boldsymbol{\mathrm{e}}_j\cdot\mathrm{v}(\alpha)Q^2({\alpha})ds_\alpha\bigg\} Q({\beta})e_l(\beta)\\
		&+\bigg\{\int_{\ps}\boldsymbol{\mathrm{e}}_j\cdot\mathrm{v}(\alpha)Q^2({\alpha})ds_\alpha\int_{-\ps}|D_l|
		Q^2(\gamma)\mathrm{v}(\gamma)ds_\gamma\cdot \mm_l\bigg\}\mathrm{v}(\beta)Q({\beta}) e_l(\beta)\\
		&+\bigg\{\int_{-\ps}|D_l|
		Q^2(\gamma)\mathrm{v}(\gamma)ds_\gamma\cdot \mm_l\int_{\ps}Q^2({\alpha})\mathrm{v}(\alpha)\mathrm{v}^\top(\alpha)ds_\alpha\boldsymbol{\mathrm{e}}_j \bigg\}Q({\beta}) e_l(\beta)\\
		&+\bigg\{\boldsymbol{\mathrm{e}}_j^\top\int_{\ps}Q^2({\alpha})\mathrm{v}(\alpha)\mathrm{v}^\top(\alpha)ds_\alpha (\mm_l)^\top\int_{-\ps}Q^2(\gamma)\mathrm{v}(\gamma)\mathrm{v}^\top(\gamma)ds_\gamma \mm_l\bigg\}\mathrm{v}(\beta)Q({\beta}) e_l(\beta)\\
		&+\mathcal{O}((k_-L)^{-1/2})\\
		=&a^l_{1,1+j}g_l+a^l_{2,1+j}h_{l,1}+a^l_{3,1+j}h_{l,2}+\mathcal{O}((k_-L)^{-1/2}),\quad j=1,2.
	\end{align*}
	
	Thus,\[{T}^0\begin{pmatrix}
		g_l&h_{l,1}&h_{l,2}
	\end{pmatrix}=\begin{pmatrix}
		g_l&h_{l,1}&h_{l,2}
	\end{pmatrix}\mathbb{A}_l+\mathcal{O}((k_-L)^{-1/2}),\]
    
	where \begin{align}\label{alll}
	    \mathbb{A}_l=\begin{pmatrix}
		a^l_{1,1}&a^l_{1,2}&a^l_{1,3}\\
		a^l_{2,1}&a^l_{2,2}&a^l_{2,3}\\
		a^l_{3,1}&a^l_{3,2}&a^l_{3,3}
	\end{pmatrix}.
	\end{align}
	\bibliographystyle{plain}
	\bibliography{bib}
\end{document}